\newtheorem{theorem}{Theorem}[section]
\newtheorem{lemma}[theorem]{Lemma}
\newtheorem{proposition}[theorem]{Proposition}
\newtheorem{corollary}[theorem]{Corollary}
\theoremstyle{definition}
\theoremstyle{remark}
\newtheorem{remark}[theorem]{Remark}
\numberwithin{equation}{section}
\newcommand{\R}{\mathbb{R}}
\newcommand{\rn}{\mathbb{R}^n}
\begin{document}

\title[Operators on weighted local  Morrey spaces]
{Singular and fractional integral operators on weighted local Morrey spaces}

\author{Javier Duoandikoetxea and Marcel Rosenthal} 
\address{Sopela, Basque Country, Spain}

\email{javier.duoandikoetxea@ehu.eus}

\address{Stuttgart, Germany} \email{marcel.rosenthal@uni-jena.de} 
\subjclass[2010]{42B20, 42B35, 46E30, 47G40}

\keywords{Local Morrey spaces, weights, Calder\'{o}n-Zygmund operators, fractional maximal operator,  fractional integral}  %, 

\begin{abstract}
We obtain a characterization of the weighted inequalities for the Riesz transforms on weighted local Morrey spaces. The condition is sufficient for the boundedness on the same spaces of all Calder\'on-Zygmund operators suitably defined on the functions of the space. In the case of the fractional maximal operator and the fractional integral we obtain a characterization valid for exponents satisfying the Sobolev relation. For power weights we get sharp results in a larger range of exponents for the usual versions of weighted Morrey spaces.  
\end{abstract}

\maketitle

\section{Introduction}\label{intro}

 We define for $1\le p<\infty$ the weighted local (also known as central) Morrey space $\mathcal {LM}^{p}(\varphi,w)$ as the class of measurable functions in $\rn$ for which  
\begin{equation}\label{normdef}
\|f\|_{\mathcal {LM}^{p}(\varphi,w)} := \sup_{B}\left(\frac 1{\varphi(B)}\int_{B}|f|^p w\right)^{1/p}<\infty,
\end{equation}
where the supremum is taken over all the Euclidean balls in $\rn$ centered at $0$. Here $w$ is a weight, that is,  a nonnegative locally integrable function, and $\varphi$ is a function defined on the set of balls in $\rn$ centered at $0$, with values in $(0,\infty)$. For the global Morrey spaces all the Euclidean balls in $\rn$ are taken.

The most studied cases of $\varphi$ correspond to the weighted Morrey spaces introduced by N.~Samko in \cite{Sam09}, and by Y.~Komori and S.~Shirai in \cite{KS09}. In general, we need to impose some conditions on $\varphi$ to avoid trivial spaces. We fix some conditions in Section \ref{bi}. 

The characterization of the weights $w$ for which the Hardy-Littlewood maximal operator is bounded on global Morrey spaces is an open problem even for the most familiar cases of $\varphi$ mentioned in the preceding paragraph. Nevertheless, a remarkable result of 
S.~Nakamura, Y.~Sawano, and H.~Tanaka in \cite{NST19}  gives necessary and sufficient conditions for the boundedness of the Hardy-Littlewood maximal operator $M$ on weighted local Morrey spaces. In \cite{DR21} the authors of this paper obtained  a simplified characterization of the same result in the form of a Muckenhoupt-type condition adapted to the Morrey setting. More precisely, we showed that the condition   
\begin{equation}\label{apmdef}
	\sup_B \frac{\|\chi _B\|_{\mathcal {LM}^{p}(\varphi,w)}\|\chi _B\|_{\mathcal {LM}^{p}(\varphi,w)'}}{|B|} <\infty
\end{equation}
is necessary and sufficient for the boundedness of $M$ on $\mathcal {LM}^{p}(\varphi,w)$. (The notation $X'$ stands for the K\"othe dual of the Banach lattice $X$. See Subsection \ref{kothesub}.) In this paper we deal first with singular integral operators and show that the slightly stronger condition 
\begin{equation}\label{sicond}
	\sup_B \frac{\|\chi _B\|_{\mathcal {LM}^{p}(\varphi,w)}\|M\chi _B\|_{\mathcal {LM}^{p}(\varphi,w)'}}{|B|} <\infty
\end{equation}
is necessary and sufficient for the boundedness of the Riesz transforms on $\mathcal {LM}^{p}(\varphi,w)$. This is valid for $1<p<\infty$, and for $p=1$ the result holds if we consider weak-type inequalities. The sufficiency is extended to all Calder\'on-Zygmund operators. We remark that due to the lack of density of smooth functions in the Morrey-type spaces the operators need to be defined in a convenient way. Condition \eqref{sicond} is the limiting case of the sufficient condition obtained in \cite[Theorem 7.3]{DR21} by using extrapolation of Lebesgue-weighted inequalities.

Next we work with the fractional maximal operator and the fractional integrals. In this case our results are limited to a particular form of the spaces. In the classical paper \cite{MW74}, B.~Muckenhoupt and R.~Wheeden characterized the weights $w$ such that the fractional integral $I_\alpha$ is bounded from $L^p(w^p)$ to $L^q(w^q)$, when $p$ and $q$ are related by the Sobolev condition $1/p-1/q=\alpha/n$. The characterizations we obtain for the boundedness of the fractional operators on weighted local Morrey spaces are also limited by the same condition on the exponents and similar conditions on the weights. More precisely, we obtain necessary and sufficent conditions for the boundedness of $M_\alpha$ and $I_\alpha$ from $\mathcal {LM}^{p}(\varphi^p,w^p)$ to $\mathcal {LM}^{q}(\varphi^q,w^q)$, with $p$ and $q$ as above. They are given by expressions which are similar to \eqref{apmdef} and \eqref{sicond}, respectively. In the case of $M_\alpha$ a necessary and sufficient condition was already obtained in \cite{NST19}. As we did for the Hardy-Littlewood maximal operator, we simplify the characterization and give it by means of a single condition, namely,    
 \begin{equation}\label{suffracmax1}
\sup_B \frac{\|\chi _B\|_{L\mathcal M^{q}(\varphi^q,w^q)}\|\chi _B\|_{L\mathcal M^{p}(\varphi^p,w^p)'}}{|B|^{1-\alpha/n}} <\infty.
\end{equation}
For the characterization of the boundedness of $I_\alpha$ we need a stronger condition as happened for singular integrals. In this case the condition reads
\begin{equation}\label{suffracint1}
\sup_B \frac{\|\chi _B\|_{L\mathcal M^{q}(\varphi^q,w^q)}\|(M\chi _B)^{1-\alpha/n}\|_{L\mathcal M^{p}(\varphi^p,w^p)'}}{|B|^{1-\alpha/n}} <\infty.
\end{equation}
A term of the form $(M\chi _B)^{1-\alpha/n}$ already appeared in the characterization of two-weighted Lebesgue inequalities for $M_\alpha$ and $I_\alpha$ (see \cite{SW92} and \cite{W93}).

It is known that in the case of unweighted global Morrey spaces the range of $p$ and $q$ can be extended to $1/p-1/q=\alpha/(n-\lambda)$ (where $\varphi(B)=r_B^\lambda$), due to a result of D.~R.~Adams. Weighted estimates on global Morrey spaces in the Adams range were obtained by T.~Iida,  Y.~Komori-Furuya, and E.~Sato in \cite{IKS11}, and more generally by S.~Nakamura, Y.~Sawano, and H.~Tanaka in \cite{NST18}. It is known that for unweighted local Morrey spaces the result in the Adams range does not hold (see \cite{KFS17}). We extend the impossibility of Adams-type estimates also to the weighted setting (Theorem \ref{teonecrange}).

In our approach in \cite{DR21} a crucial role was played by the local Hardy-Littlewood maximal operator, where local refers to the use of balls separated from the origin. These operators and their weighted inequalities were studied in \cite{LS10} and \cite{HSV14}. In this paper we also need local versions of the operators we study. The weighted results we use were given in \cite{HSV19} and are presented in Subsection  \ref{localsubsec}.

In Section \ref{zortzi} we consider power weights and obtain sharp results for the special cases of weighted local Morrey spaces of Samko and Komori-Shirai type.

\subsection*{Notation} \begin{itemize}
\item Given a set $A$, $\chi_A$ is its characteristic function, $|A|$ is its Lebesgue measure, and for a weight $v$, $v(A)$ is the integral of $v$ over $A$ (but notice that $\varphi$ is not a weight in the definition of the norm of the Morrey space);  
\item for a ball $B$, $r_B$ will denote its radius and $c_B$ its center; for $\lambda>0$, $\lambda B$ denotes the ball with center $c_B$ and radius $\lambda r_B$; $\widetilde{B}$ is the smallest ball centered at the origin containing $B$, that is, $\widetilde{B}=B(0, |c_B|+r_B)$;
\item For $\varphi$ defined on balls we denote as $\varphi^p$ the function such that $\varphi^p(B)=\varphi(B)^p$;
\item $F\lesssim G$ means that the inequality $F\le c\,G$ holds with a constant $c$ depending only on the ambient parameters; and $F\sim G$ means that both $F\lesssim G$ and $G\lesssim F$ hold.
\end{itemize}

%%%%%%%%%%%%%%%%%%%%%%%%%%%%  

\section{Preliminaries}\label{bi}

The most used versions of weighted Morrey spaces correspond to the choices $\varphi(B)=r_B^{\lambda}$ ($0<\lambda<n$), introduced by N.~Samko in \cite{Sam09}, and
$\varphi(B)=w(B)^{\lambda/n}$ ($0<\lambda<n$), introduced by Komori and Shirai in \cite{KS09}. Generalized weighted (global and local) Morrey spaces have been considered by several authors. For general weighted local Morrey spaces we refer for instance to \cite{NST19}. 

\subsection{Conditions on $\varphi$}
We assume that $\varphi$ is doubling, that is, 
\begin{equation*}
\varphi(2B)\lesssim \varphi(B)
\end{equation*}
for $B$ centered at the origin. 
We also assume that it satisfies a reverse doubling property of the form 
\begin{equation}\label{rdcond}
\frac {\varphi(B(0,r))}{\varphi(B(0,R))}\lesssim \left( \frac rR\right)^{\delta},
\end{equation}
for some $\delta>0$.  From this property it follows that if $0<b<1$ it holds that 
\begin{equation}\label{rdplus}
\sum_{j=0}^\infty \varphi(B(0,b^jR))\lesssim  \varphi(B(0,R)).
\end{equation}

For a measure $\mu$ the doubling condition on all the Euclidean balls of $\rn$ implies the reverse doubling property (see \cite[Lemma 20]{ST89}). When $\varphi$ is of the form $\varphi(B) =\mu(B)^\lambda$ for a doubling measure $\mu$, then \eqref{rdcond} is satisfied.

\begin{proposition}\label{newnorm}
Assume that $\varphi$ satisfies \eqref{rdplus}. Then 
the norm \eqref{normdef} is equivalent to 
\begin{equation}\label{normrestrict}
\sup_{B: r_B=\kappa |c_B|}\left(\frac 1{\varphi(\widetilde B)}\int_{B}|f|^p w\right)^{1/p}<\infty,
\end{equation}
where $\kappa\in (0,1)$ is fixed. 
\end{proposition}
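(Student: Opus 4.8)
The plan is to prove the equivalence by establishing the two inequalities separately, writing $N_0(f)$ for the norm \eqref{normdef} and $N_\kappa(f)$ for the quantity \eqref{normrestrict}. One direction is immediate and uses nothing about $\varphi$: if $B$ is admissible for $N_\kappa$, i.e. $r_B=\kappa|c_B|$, then $B\subseteq\widetilde B$ and $\widetilde B$ is centered at the origin, so
\[
\frac{1}{\varphi(\widetilde B)}\int_B |f|^p w \le \frac{1}{\varphi(\widetilde B)}\int_{\widetilde B}|f|^p w \le N_0(f)^p,
\]
and taking the supremum gives $N_\kappa(f)\le N_0(f)$.

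The substance lies in the reverse bound $N_0(f)\lesssim N_\kappa(f)$. First I would fix a ball $B(0,R)$ and a parameter $b\in(0,1)$ close to $1$, and decompose it (up to the null set $\{0\}$) into geometric annuli $A_j=\{x: b^{j+1}R\le |x|< b^j R\}$, $j\ge 0$. The key geometric step is a covering lemma: each $A_j$ is covered by at most $K=K(n,\kappa)$ admissible balls $B_{j,i}$ with $r_{B_{j,i}}=\kappa|c_{B_{j,i}}|=\kappa b^j R$. Concretely I would take the centers $c_{j,i}$ on the sphere of radius $b^jR$ as a maximal $(\kappa b^jR/2)$-separated set, whose cardinality is $\sim(2/\kappa)^{n-1}$ uniformly in $j$; for $x\in A_j$, projecting radially to $\bar x=(b^jR)\,x/|x|$ and applying the triangle inequality gives $|x-c_{j,i}|\le(1-b)b^jR+\tfrac12\kappa b^jR$ for the nearest center, which is $\le\kappa b^jR$ provided $1-b\le\kappa/2$. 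Thus the choice $b=1-\kappa/2\in(0,1)$ makes the covering work, and for each such ball $\widetilde{B_{j,i}}=B(0,(1+\kappa)b^jR)$.

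Summing over the decomposition and using that each $B_{j,i}$ is admissible for $N_\kappa$ yields
\[
\int_{B(0,R)}|f|^p w \le \sum_{j\ge 0}\sum_{i=1}^{K}\int_{B_{j,i}}|f|^p w \le K\,N_\kappa(f)^p\sum_{j\ge 0}\varphi\bigl(B(0,(1+\kappa)b^jR)\bigr).
\]
Here \eqref{rdplus}, applied with starting radius $(1+\kappa)R$, collapses the series to $\varphi(B(0,(1+\kappa)R))$. A final comparison controls this by $\varphi(B(0,R))$: since $(1+\kappa)R\le 2R$, the reverse doubling \eqref{rdcond} gives $\varphi(B(0,(1+\kappa)R))\lesssim\varphi(B(0,2R))$, and doubling then gives $\varphi(B(0,2R))\lesssim\varphi(B(0,R))$. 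Dividing by $\varphi(B(0,R))$, taking $p$-th roots and the supremum over $R$ produces $N_0(f)\lesssim N_\kappa(f)$.

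I expect the only genuine obstacle to be the covering lemma, specifically verifying that the number of admissible balls needed per annulus is bounded uniformly in $j$ (so that the constant $K$ does not interfere with the summation) and pinning down the admissible range of $b$ relative to $\kappa$; once the covering is in place, the $\varphi$-bookkeeping is routine, relying only on \eqref{rdplus} and the doubling/reverse-doubling hypotheses. The hypothesis \eqref{rdplus} is exactly what is needed to sum the contributions of the shrinking annuli, and this is where the reverse doubling assumption on $\varphi$ enters essentially.
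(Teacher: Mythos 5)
Your proof is correct and follows essentially the same route as the paper's: decompose $B(0,R)$ into geometric annuli, cover each annulus by a uniformly bounded number of balls satisfying $r_B=\kappa|c_B|$, and sum the resulting $\varphi$-contributions via \eqref{rdplus} together with the standing doubling/reverse-doubling assumptions. The only difference is cosmetic: you place the centers on the outer sphere of each annulus (forcing $b=1-\kappa/2$), while the paper places them equidistant from the two bounding spheres and recovers $\kappa=2(1-b)/(1+b)$.
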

We recall that $\widetilde{B}$ denotes the smallest ball centered at the origin containing $B$. The main idea for this reduction comes from \cite{NST19}. We adapt here to local Morrey spaces the proof given in \cite[Proposition 2.1]{DR20} for global Morrey spaces. 

\begin{proof}
It is clear that \eqref{normrestrict} is not greater than \eqref{normdef} because $B\subset \widetilde B$.

Let $b>1/3$. Set $A_j:= B(0, b^jR)\setminus B(0, b^{j+1}R)$. There exists $C(n)$ depending only on the dimension $n$ such that the annulus $A_j$ can be covered by $C(n)$ balls $B_{k,j}:=B(y_{k,j},r_j)$, $k=1,\dots,C(n)$, for which $|y_{k,j}|=b^j(1+b)R/2$ (that is, $y_{k,j}$ is equidistant from the spheres of the boundary of the annulus) and $r_j=b^j(1-b)R$ (the width of the annulus). By construction $\widetilde{B_{k,j}}$ is independent of $k$ and satisfies $B(0, b^jR)\subset \widetilde{B_{k,j}}\subset  B(0, b^{j-1}R)$. Then $\varphi(\widetilde{B_{k,j}})\lesssim \varphi(B(0, b^{j-1}R))$. Consequently,
 \begin{equation*}
\aligned
&\frac 1{\varphi(B(0, R))}\int_{B(0, R)}|f|^p w =\frac 1{\varphi(B(0, R))}\sum_{j=0}^\infty \int_{A_j}|f|^p w\\
&\qquad\le \frac 1{\varphi(B(0, R))}\,C(n)\sum_{j=0}^\infty \varphi(B(0, b^{j-1}R)) \sup_{k,j}\frac 1{\varphi(\widetilde{B_{k,j}})}\int_{B_{k,j}}|f|^p w.
\endaligned
\end{equation*}
The balls $B_{k,j}$ satisfy $r_B=\kappa |c_B|$ for $\kappa=2(1-b)/(1+b)$. Using \eqref{rdplus} we obtain the control of  \eqref{normdef} by \eqref{normrestrict}.
\end{proof}

\begin{remark}\label{rm22}
For a function $f$ supported in a ball $B$ with $r_B\le \kappa  |c_B|$ ($0<\kappa<1$) it holds that
\begin{equation}\label{locloc}
\|f\|_{\mathcal {LM}^{p}(\varphi,w)}\sim \left(\frac{1}{\varphi(\widetilde{B})}\int_B |f|^pw\right)^{1/p}.
\end{equation}
This is because the balls $B(0,R)$ with $R<|c_B|-r_B$ do not intersect $B$ and $\varphi(B(0,R))$ is essentially constant for $|c_B|-r_B<R<|c_B|+r_B$, due to the properties of $\varphi$.  In particular, if $f$ is the characteristic function of $B$,
\begin{equation}\label{locchar}
\|\chi_B\|_{\mathcal {LM}^{p}(\varphi,w)}\sim \left(\frac{w(B)}{\varphi(\widetilde{B})}\right)^{1/p}.
\end{equation} 
\end{remark}

\begin{proposition}\label{rm23}
Let $\gamma\in \R$. Define $\psi(B)=\varphi(B) r_B^\gamma$ and $v(x)=w(x) |x|^\gamma$. If both $\psi$ and $\varphi$ satisfy \eqref{rdplus}, then $\mathcal {LM}^{p}(\varphi,w)=\mathcal {LM}^{p}(\psi,v)$.
\end{proposition}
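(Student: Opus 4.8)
The aim is to prove that the two norms are equivalent; this yields the equality of the spaces (with equivalent norms). The one fact driving the argument is a geometric comparison on balls separated from the origin: if $B$ is a ball with $r_B=\kappa|c_B|$ for the fixed $\kappa\in(0,1)$, then every $x\in B$ satisfies $(1/\kappa-1)r_B\le|x|\le(1/\kappa+1)r_B$, so that $|x|\sim r_B\sim r_{\widetilde B}$, and hence $|x|^\gamma\sim r_{\widetilde B}^{\,\gamma}$ on $B$, with constants depending only on $\kappa$ and $\gamma$. I would stress at the outset that this comparison \emph{fails} on a ball centered at the origin, where $|x|^\gamma$ degenerates as $x\to 0$ (most dramatically for $\gamma<0$). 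This degeneration is exactly the obstacle, and the whole point is that Proposition \ref{newnorm} lets us sidestep it by reducing both norms to suprema over balls separated from the origin.

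Concretely, I would first observe that $\psi$ inherits the doubling property from $\varphi$, since for $B$ centered at the origin one has $r_{2B}^\gamma=2^\gamma r_B^\gamma$, whence $\psi(2B)=\varphi(2B)\,r_{2B}^\gamma\lesssim\varphi(B)\,r_B^\gamma=\psi(B)$. Since by hypothesis both $\varphi$ and $\psi$ satisfy \eqref{rdplus}, Proposition \ref{newnorm} applies to each of the two spaces and gives
\[
\|f\|_{\mathcal {LM}^{p}(\varphi,w)}\sim\sup_{B:\,r_B=\kappa|c_B|}\left(\frac1{\varphi(\widetilde B)}\int_B|f|^pw\right)^{1/p}
\]
and
\[
\|f\|_{\mathcal {LM}^{p}(\psi,v)}\sim\sup_{B:\,r_B=\kappa|c_B|}\left(\frac1{\psi(\widetilde B)}\int_B|f|^pv\right)^{1/p},
\]
where in both suprema the same fixed $\kappa\in(0,1)$ is used.

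It then remains to compare the two restricted expressions ball by ball. For a ball $B$ with $r_B=\kappa|c_B|$, using $\psi(\widetilde B)=\varphi(\widetilde B)\,r_{\widetilde B}^{\,\gamma}$, the identity $v(x)=w(x)|x|^\gamma$, and the pointwise comparison $|x|^\gamma\sim r_{\widetilde B}^{\,\gamma}$ on $B$ noted above, I would write
\[
\frac1{\psi(\widetilde B)}\int_B|f|^pv=\frac1{\varphi(\widetilde B)\,r_{\widetilde B}^{\,\gamma}}\int_B|f|^pw\,|x|^\gamma\sim\frac1{\varphi(\widetilde B)}\int_B|f|^pw,
\]
with implied constants depending only on $\kappa$ and $\gamma$. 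Taking the supremum over all balls with $r_B=\kappa|c_B|$ and invoking the two equivalences from Proposition \ref{newnorm} gives $\|f\|_{\mathcal {LM}^{p}(\psi,v)}\sim\|f\|_{\mathcal {LM}^{p}(\varphi,w)}$, which is the assertion. The main obstacle is conceptual rather than computational: recognizing that the factor $|x|^\gamma$ cannot be absorbed into $r_B^\gamma$ on balls centered at the origin, and that the reduction of Proposition \ref{newnorm} is precisely what converts the problematic weight $|x|^\gamma$ into the harmless radial factor $r_{\widetilde B}^{\,\gamma}$. Once this reduction is in place the remaining steps are the routine verification of doubling for $\psi$ and the elementary comparison displayed above.
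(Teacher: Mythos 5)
Your proof is correct and follows essentially the same route as the paper: reduce both norms to the restricted supremum of Proposition \ref{newnorm} over balls with $r_B=\kappa|c_B|$, then use $|x|\sim r_B\sim r_{\widetilde B}$ on such balls to match $|x|^\gamma$ against $r_{\widetilde B}^{\,\gamma}$ termwise. The paper's proof is just a one-line version of this; your added checks (the doubling of $\psi$, the explicit constants) are harmless but not needed, since \eqref{rdplus} for both $\varphi$ and $\psi$ is already assumed in the statement.
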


\begin{proof}
 It is enough to realize that for the balls $B$ involved in \eqref{normrestrict} if $x\in B$ we have $|x|\sim r_B$, and also $r_B\sim r_{\widetilde B}$.
\end{proof}

The spaces need not be the same without the reduction of the norm to \eqref{normrestrict}. For instance, if $\varphi(B)=r_B^\lambda$ with $\lambda>0$ and $w(x)\equiv 1$, the space $\mathcal {LM}^{p}(\varphi,w)$ is not trivial, but $\mathcal {LM}^{p}(\psi,v)=\{0\}$ if $\lambda+\gamma$ is negative.

\subsection{The K\"othe dual (associate space) of a Banach lattice} \label{kothesub}
Let $X$ be a Banach lattice of measurable functions in $\rn$ (a Banach space in which $|g|\le |f|$ a.e. and $f\in X$ implies $g\in X$ and $\|g\|_X\le \|f\|_X$). We define its K\"othe dual as the space $X'$ collecting the measurable functions $g$ such that
\begin{equation}\label{kothe}
\|g\|_{X'}:=\sup \left\{ \int_{\rn} |fg|: f\in X \text{ and } \|f\|_X\le 1\right\}<\infty.
\end{equation}
It is also a Banach lattice. An immediate consequence of \eqref{kothe} is the H\"older-type inequality
\begin{equation}\label{holder}
\int_{\rn} |fg| \leq \|f\|_X \|g\|_{X'}.
\end{equation}

\subsection{Weak-type local Morrey spaces}\label{subs23}

We define for $1\le p<\infty$ the weighted weak-type local Morrey space $W\mathcal {LM}^{p}(\varphi,w)$ as the collection of measurable functions in $\rn$ for which  
\begin{equation}\label{wnormdef}
\|f\|_{W\mathcal {LM}^{p}(\varphi,w)}  = \sup_{B}\sup_{t>0}\frac {t (w(\{x\in B: |f(x)|>t\})^{1/p}}{\varphi(B)^{1/p}}<\infty.
\end{equation}
It is clear that $\mathcal {LM}^{p}(\varphi,w)$ is continuously embedded in $W\mathcal {LM}^{p}(\varphi,w)$. Also that for a measurable set $E$, it holds
\begin{equation}\label{wcharac}
\|\chi_E\|_{W\mathcal {LM}^{p}(\varphi,w)} = \|\chi_E\|_{\mathcal {LM}^{p}(\varphi,w)}.
\end{equation}
In particular, we have \eqref{locchar} with $W\mathcal {LM}^{p}(\varphi,w)$ instead of $\mathcal {LM}^{p}(\varphi,w)$.

\subsection{The local Hardy-Littlewood maximal operator, local fractional operators and local singular integrals}\label{localsubsec}

For fixed $\kappa\in (0,1)$ we define the basis $\mathcal{B}_{\kappa,\text{loc}}$ as the family of balls $B$ such that $r_B<\kappa |c_B|$. For $\alpha\in [0,n)$ we define the operator $M_{\kappa,\alpha,\text{loc}}$ associated to $\mathcal{B}_{\kappa,\text{loc}}$ acting on $f$ at $x\ne 0$ as
\begin{equation}\label{defmaxloc}
M_{\kappa,\alpha,\text{loc}}f(x)=\sup_{x\in \mathcal{B}_{\kappa,\text{loc}}}\frac 1{|B|^{1-\alpha/n}}\int_{B}|f|.
\end{equation}
For $\alpha=0$ this is the local Hardy-Littlewood maximal operator, which we denote simply as $M_{\kappa,\text{loc}}$. For $\alpha\in (0,n)$ it is the local fractional maximal operator. Notice that the term local here is associated to a family of balls different from the one used in the definition of the local Morrey spaces.

C.-C.~Lin and K.~Stempak studied in \cite{LS10} a variant of $M_{\kappa,\text{loc}}$ and characterized its weighted inequalities by a local $A_p$ condition. E. Harboure, O. Salinas, and B. Viviani obtained in \cite{HSV14} a similar result in a more general setting, and in \cite{HSV19} they studied weighted inequalities for the fractional case and for singular integrals. Their result applied to the operator in \eqref{defmaxloc} says the following:  let $1<p<n/\alpha$ and $q$ given by $1/p-1/q=\alpha/n$. Then $M_{\kappa, \alpha, \text{loc}}$ is bounded from  $L^p(w^p)$ to $L^q(w^q)$ if and only if $w$ satisfies the $A_{p,q,\text{loc}}$ condition 
\begin{equation}\label{defapqloc}
\sup_{B\in \mathcal{B}_{\kappa,\text{loc}}} \frac{w^q(B)^{1/q}\,w^{-p'}(B)^{1/p'}}{|B|^{1-\frac \alpha n}}<\infty.
\end{equation}
For $\alpha=0$ this is the usual $A_p$ condition adapted to the local setting (written for $w^p$ instead of $w$), and for $\alpha>0$ this is the local version of the condition for fractional operators in \cite{MW74}. For $p=1$ and $q=n/(n-\alpha)$ the condition is modified as usual,  
\begin{equation}\label{defa1loc}
\frac{w^{n/(n-\alpha)}(B)}{|B|}\lesssim\inf_{x\in B} w(x)^{\frac n{n-\alpha}}\quad \text{for}\quad B\in \mathcal{B}_{\kappa,\text{loc}},
\end{equation}
and it characterizes the weak-type boundedness of $M_{\kappa, \alpha, \text{loc}}$ from $L^1(w)$ to $L^{\frac n{n-\alpha},\infty}(w)$.

In the mentioned papers the authors also proved that the conditions are independent of $\kappa$, so that we do not use this parameter in the notation of the weight class. It is clear that the usual $A_{p,q}$ weights are in $A_{p,q,\text{loc}}$ and it is immediate to check that $|x|^\alpha w(x)\in A_{p,q,\text{loc}}$ whenever $w\in A_{p,q}$ and $\alpha\in \R$. 

In \cite{HSV19} the local fractional integral $I_{\kappa,\alpha,\text{loc}}$ is defined as
\begin{equation}\label{deffracintloc}
I_{\kappa,\alpha,\text{loc}}f(x)=\int_{\{y: |y-x|<\kappa |x|\}}\frac {f(y)}{|x-y|^{n-\alpha}}\,dy.
\end{equation} 
It is proved that the boundedness from $L^p(w^p)$ to $L^q(w^q)$ of $I_{\kappa,\alpha,\text{loc}}$ is characterized also by \eqref{defapqloc}, and the weak-type for $p=1$ by \eqref{defa1loc}.

A local version of Calder\'on-Zygmund operators was also introduced in \cite{HSV19}. We do not need this general concept for our work, but only the following particular result. 

\begin{theorem}\cite[Corollary 5.2]{HSV19}\label{hsvcz}
Let $T$ be a Calder\'on-Zygmund operator in $\rn$ and $\kappa\in (0,1)$. Define 
\begin{equation}\label{localizedcz}
T_\kappa f(x)=T(f\chi_{B(x, \kappa |x|)})(x).
\end{equation}
Then $T_\kappa$ is bounded on $L^p(w)$ for $1<p<\infty$ and $w\in A_{p,\text{loc}}$, and of weak type (1,1) with respect to weights in $A_{1,\text{loc}}$.
\end{theorem}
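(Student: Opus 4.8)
The plan is to reduce the weighted boundedness of $T_\kappa$ to the classical weighted theory of Calderón--Zygmund operators. A first useful step is to reduce to the case of small $\kappa$. Indeed, if $\kappa_0<\kappa$ then the difference
\begin{equation*}
T_\kappa f(x)-T_{\kappa_0}f(x)=\int_{\kappa_0|x|\le|x-y|<\kappa|x|}K(x,y)f(y)\,dy
\end{equation*}
involves the kernel only away from its singularity, where $|K(x,y)|\lesssim(\kappa_0|x|)^{-n}$, so it is bounded pointwise by a constant times $M_{\kappa,\mathrm{loc}}f(x)$, the local Hardy--Littlewood maximal function. Since $M_{\kappa,\mathrm{loc}}$ is bounded on $L^p(w)$ for $w\in A_{p,\mathrm{loc}}$ (and of weak type $(1,1)$ for $w\in A_{1,\mathrm{loc}}$) by the results quoted above, it suffices to prove the theorem for $\kappa$ as small as we wish.

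With $\kappa$ small I would fix a Whitney-type covering of $\rn\setminus\{0\}$ by balls $D$ with $r_D\sim\kappa|c_D|$, chosen so that $D\in\mathcal B_{\kappa,\mathrm{loc}}$, the dilates $CD$ have bounded overlap and stay uniformly away from the origin, and $B(x,\kappa|x|)\subset CD$ for every $x\in D$. The last property uses that $\kappa$ is small and is the crucial point: it gives $T_\kappa f(x)=T_\kappa(f\chi_{CD})(x)$ for $x\in D$, whence
\begin{equation*}
\|T_\kappa f\|_{L^p(w)}^p\le\sum_D\int_D|T_\kappa(f\chi_{CD})|^p\,w .
\end{equation*}
Because $CD$ stays away from the origin, every subball of $CD$ lies in $\mathcal B_{\kappa',\mathrm{loc}}$ for a slightly larger $\kappa'$, and the $\kappa$-independence of the local class then shows that $w|_{CD}$ is a genuine Muckenhoupt weight with $[w|_{CD}]_{A_p}\lesssim[w]_{A_{p,\mathrm{loc}}}$.

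Extending $w|_{CD}$ to a global weight $\widetilde w_D\in A_p(\rn)$ of comparable constant by the standard extension of an $A_p$ weight from a ball to $\rn$, I would compare $T_\kappa$ with the fixed operator $T$ through
\begin{equation*}
|T_\kappa g(x)|\le|Tg(x)|+\Bigl|\int_{|x-y|\ge\kappa|x|}K(x,y)g(y)\,dy\Bigr|\le|Tg(x)|+T^\star g(x),\quad g=f\chi_{CD},
\end{equation*}
with $T^\star$ the maximal truncated singular integral. As $T$ and $T^\star$ are bounded on $L^p(\widetilde w_D)$ with norm depending on the weight only through $[\widetilde w_D]_{A_p}$, and $g$ is supported where $\widetilde w_D=w$, I obtain $\int_D|T_\kappa(f\chi_{CD})|^p w\lesssim\int_{CD}|f|^p w$ uniformly in $D$; summing over the covering and using the bounded overlap settles the case $1<p<\infty$. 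For $p=1$ I would repeat the argument with the weighted weak-$(1,1)$ inequalities for $T$ and $T^\star$ relative to $A_1$ weights (again via $w|_{CD}\mapsto\widetilde w_D\in A_1$), and then add the resulting level-set measures over the boundedly overlapping balls.

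The step I expect to be most delicate is the weight transfer: one must produce, uniformly over all balls $D$ of the covering and hence over all scales, a global Muckenhoupt weight whose $A_p$ characteristic is controlled solely by $[w]_{A_{p,\mathrm{loc}}}$. This uniformity is precisely what makes the constants in the final sum independent of $D$, since the fixed operators $T$ and $T^\star$ are then applied with weights of uniformly bounded $A_p$ constant. The two reductions---to small $\kappa$ and to the covering---are what allow the purely local condition $A_{p,\mathrm{loc}}$ to replace the global $A_p$ condition, which is not assumed: the short-range part of $T_\kappa$ is handled by the covering and the remaining non-singular part by the local maximal operator.
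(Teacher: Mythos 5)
The paper does not prove this statement: it is imported verbatim as \cite[Corollary 5.2]{HSV19} and used as a black box, so there is no in-paper argument to compare yours against. Judged on its own, your outline is sound and is in the spirit of how such local results are established in the literature: the reduction to small $\kappa$ via the kernel size bound and the local maximal operator is correct (the annulus term is pointwise dominated by $M_{\kappa,\mathrm{loc}}f$, whose weighted bounds are exactly the quoted results); a Whitney-type covering of $\rn\setminus\{0\}$ by balls $D$ with $r_D\sim\kappa|c_D|$, bounded overlap of the dilates $CD$, and $B(x,\kappa|x|)\subset CD$ for $x\in D$ does exist once $\kappa$ is small; the splitting $|T_\kappa g|\le|Tg|+T^\star g$ for $g$ supported in $CD$ is legitimate because $\int_{|x-y|\ge\kappa|x|}K(x,y)g(y)\,dy$ is a single truncation of the singular integral; and the bounded overlap carries out the summation in both the strong and the weak case.

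The step you rightly flag as delicate is also the one that is not self-contained: passing from ``$w$ satisfies the $A_p$ condition on all subballs of $CD$, uniformly in $D$'' to a global weight $\widetilde w_D\in A_p(\rn)$ agreeing with $w$ on $CD$ and with $[\widetilde w_D]_{A_p}$ controlled only by $[w]_{A_{p,\mathrm{loc}}}$. This is Wolff's extension theorem for $A_p$ weights (see, e.g., Garc\'{\i}a-Cuerva and Rubio de Francia, Ch.~IV); it is true and delivers exactly the uniformity over all $D$ that your final summation requires, but it is a genuine theorem that must be cited rather than treated as routine. Two smaller points to tighten: the $\kappa'$ in ``every subball of $CD$ lies in $\mathcal B_{\kappa',\mathrm{loc}}$'' is of order $C\kappa$ rather than ``slightly larger'' than $\kappa$, which is precisely why the preliminary reduction to small $\kappa$ must precede the covering step; and you should record that $f\chi_{B(x,\kappa|x|)}\in L^1$ when $f\in L^p(w)$ and $w\in A_{p,\mathrm{loc}}$ (H\"older against $w^{1-p'}$ on local balls), so that $T_\kappa f$ and the kernel representations you manipulate are actually defined. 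With the extension theorem cited, your argument closes.
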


%%%%%%%%%%%%%%%%%%%%%%%%%%%%%
\section{Necessary condition for singular integrals}\label{hiru}

A Calder\'on-Zygmund operator $T$ is an operator bounded on $L^2$ and associated with a standard kernel $K$ in the sense that if $f$ is a compactly supported $\mathcal C^\infty$ function, and $x\notin \text{supp}\,f$, $Tf(x)$ is given by
\begin{equation}\label{kernelrepr}
Tf(x)=\int_{\rn} K(x,y) f(y)\, dy.
\end{equation}
A standard kernel $K$ must satisfiy a size condition
\begin{equation}\label{kernelsize}
|K(x,y)|\lesssim \frac 1{|x-y|^n},
\end{equation}
and a regularity condition which we will not need to detail here (see \cite [Subsection 8.1.1]{Gr14}, for instance). 
Using the $L^2$ boundedness of $T$ it is shown in \cite [Proposition 8.1.9]{Gr14}  that if $f$ is bounded and compactly supported, and $x\notin \text{supp}\,f$, the representation \eqref{kernelrepr} holds as an absolutely convergent integral. We will assume that when we extend the operator to local Morrey spaces we keep such representation for bounded compactly supported functions.

We obtain the necessary condition for Riesz transforms in the general setting of Banach lattices. We recall that the Riesz transform $R_j$ ($1\le j\le n$) is defined (usually up to a constant factor) as the following principal value integral:
\begin{equation}
R_jf(x)=\text{p.v.}\int_{\rn} \frac {x_j-y_j}{|x-y|^{n+1}}\,f(y) dy. 
\end{equation}
The definition makes sense for smooth functions. It is a Calder\'on-Zygmund operator whose kernel is given by 
\begin{equation*}
K(x,y) = \frac {x_j-y_j}{|x-y|^{n+1}}.
\end{equation*}

\begin{theorem}\label{necrieszteo}
 Let $X$ be a Banach lattice of measurable functions in $\rn$ that contains the characteristic functions of balls. If the Riesz transforms are bounded on $X$, then 
\begin{equation}\label{necriesz}
\sup_B \frac{\|\chi _B\|_{X}\|M\chi _B\|_{X'}}{|B|} <\infty,
\end{equation}
where the supremum is taken over all balls in $\rn$.
\end{theorem}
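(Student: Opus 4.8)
\emph{Reduction.} By the definition \eqref{kothe} of the K\"othe dual, $\|M\chi_B\|_{X'}=\sup\{\int |g|\,M\chi_B:\|g\|_X\le1\}$, and since $M\chi_B\ge0$ and $X$ is a lattice we may take $g\ge0$. A monotone convergence (truncation) argument shows that it suffices to let $g$ range over nonnegative, bounded, compactly supported functions with $\|g\|_X\le 1$; this also legitimizes the use of the kernel representation \eqref{kernelrepr} below, since such $g$ restricted to a source set separated from the ball on which we evaluate $R_j$ gives an absolutely convergent integral. Thus I would reduce \eqref{necriesz} to the uniform estimate $\int g\,M\chi_B\lesssim |B|/\|\chi_B\|_X$. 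Write $B=B(c,r)$ and recall $M\chi_B\lesssim 1$ everywhere, while $M\chi_B(x)\sim |B|\,|x-c|^{-n}$ for $x\notin 2B$. The engine of the proof is the following consequence of the boundedness of $R_j$ on $X$: if $S$ is a ball, $E$ a source set, and $e_j$ a direction such that $y_j-x_j$ keeps a \emph{fixed sign} for all $y\in S$ and $x\in E$, then $R_j(g\chi_E)$ keeps that sign on $S$ and $|R_j(g\chi_E)(y)|\sim\int_E \frac{|x_j-y_j|}{|x-y|^{n+1}}\,g(x)\,dx$ for $y\in S$, the integral being essentially independent of $y\in S$ in the regimes below. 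Calling $A$ a lower bound for this common value, the pointwise inequality $|R_j(g\chi_E)|\gtrsim A\,\chi_S$, combined with $\|R_j(g\chi_E)\|_X\lesssim\|g\chi_E\|_X\le\|g\|_X\le1$ and the lattice property, forces $A\lesssim 1/\|\chi_S\|_X$. No H\"older inequality is used here: the gain over \eqref{holder} comes precisely from this lattice lower bound.

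\emph{Far region.} I would decompose $\rn\setminus 2B$ into the finitely many cones $\Gamma_j^\pm=\{x:|x-c|\ge 2r,\ \pm(x_j-c_j)\ge |x-c|/\sqrt n\}$, which cover it. On $\Gamma_j^\pm$ and for every $y\in B$ the factor $y_j-x_j$ has constant sign and $|x_j-y_j|\sim|x-y|\sim|x-c|$ (using the size condition \eqref{kernelsize} and $|y-c|\le r\le |x-c|/2$), so the engine with $E=\Gamma_j^\pm$, $S=B$ gives $\int_{\Gamma_j^\pm} g(x)\,|x-c|^{-n}\,dx\lesssim 1/\|\chi_B\|_X$. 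Summing over the cones and inserting $M\chi_B(x)\sim |B|\,|x-c|^{-n}$ off $2B$,
\[
\int_{\rn\setminus 2B} g\,M\chi_B \lesssim |B|\sum_{j=1}^{n}\sum_{\pm}\int_{\Gamma_j^\pm} g(x)\,|x-c|^{-n}\,dx \lesssim \frac{|B|}{\|\chi_B\|_X}.
\]

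\emph{Near region.} Here I would use only $M\chi_B\le 1$, so it remains to bound $\int_{2B} g$. Cover $2B$ by a bounded number of balls $B_i=B(c_i,r)$ of the \emph{same} radius $r$. For each, choose a companion ball $S_i=B(c_i+3re_j,r)$; then for $y\in S_i$, $x\in B_i$ the factor $y_j-x_j\ge r$ has fixed sign and $|x-y|\sim r$, so the engine with $E=B_i$, $S=S_i$ yields $\int_{B_i} g\lesssim |B|/\|\chi_{S_i}\|_X$. Each $S_i$ is an $r$-ball with center within $O(r)$ of $c$, so it remains to compare $\|\chi_{S_i}\|_X$ with $\|\chi_B\|_X$. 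This comparability is again a case of the engine applied to $g=\chi_V$: for equal-radius balls with $V=U+3re_j$ one gets $|R_j\chi_V|\gtrsim\chi_U$, hence $\|\chi_U\|_X\lesssim\|R_j\chi_V\|_X\lesssim\|\chi_V\|_X$, and symmetrically; chaining $O(1)$ such coordinate displacements gives $\|\chi_{S_i}\|_X\sim\|\chi_B\|_X$. Therefore $\int_{2B} g\le\sum_i\int_{B_i} g\lesssim |B|/\|\chi_B\|_X$.

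Combining the two regions gives $\int g\,M\chi_B\lesssim |B|/\|\chi_B\|_X$, and taking the supremum over $g$ and then over $B$ yields \eqref{necriesz}. \textbf{The main obstacle is the near region.} Because the Riesz kernels are odd, $R_j\chi_B$ vanishes at $c$ and is small on a fixed fraction of $B$, so the pointwise domination $M\chi_B\lesssim\sum_j|R_j\chi_B|$ that drives the far region simply fails there; one cannot read off $M\chi_B$ near the center from the Riesz transforms. The companion-ball (symmetry-breaking) device, together with the comparability of $X$-norms of characteristic functions of nearby equal-radius balls — itself extracted from the same boundedness hypothesis — is what repairs this. The remaining work is the routine but essential verification that the kernel keeps a fixed sign and the stated size on each companion ball, which rests on \eqref{kernelsize} and on the separation between the source set $E$ and the evaluation ball $S$.
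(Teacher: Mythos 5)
Your proposal is, in substance, the paper's own proof: the same reduction to bounded compactly supported functions via monotone convergence, the same directional decomposition of the far region into $2n$ sign-definite pieces on which $|R_j|$ dominates $\int g\,M\chi_B/|B|$ times a characteristic function, the same companion-ball device for the near region (where the oddness of the kernel kills any direct lower bound), and the same comparability of $\|\chi_U\|_X$ for equal-radius balls displaced along a coordinate direction, extracted from the boundedness of $R_j$ (this is Lemma \ref{normcomp} in the paper). The only genuinely different detail is cosmetic: the paper removes a cube $2Q$ and uses a single companion ball $B'$ for all of $2Q$, while you cover $2B$ by $O_n(1)$ balls of radius $r$ with individual companions; both work.

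One quantitative point needs repair. With your cones $\Gamma_j^{\pm}=\{x:|x-c|\ge 2r,\ \pm(x_j-c_j)\ge |x-c|/\sqrt n\}$ and $y\in B(c,r)$, you only get $x_j-y_j\ge |x-c|/\sqrt n - r\ge r\left(2/\sqrt n-1\right)$, which is nonpositive for $n\ge 4$; the fixed sign and the equivalence $|x_j-y_j|\sim|x-c|$ both fail near the cone boundary. The fix is to take the far region to be $\rn\setminus C_nB$ with, say, $C_n=4\sqrt n$ (so that $x_j-y_j\ge |x-c|/(2\sqrt n)$ there), absorbing the annulus $C_nB\setminus 2B$ into the near region, which is still covered by $O_n(1)$ balls of radius $r$ and where $M\chi_B\le 1$ is all you use. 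This is exactly the issue the paper's cube-and-diagonal decomposition $\{P_j^{\pm}\}$ is designed to avoid, since $y\notin 2Q$ forces $\max_i|y_i-c_i|\ge 2r$, so subtracting the displacement $r$ of the point inside $B$ cannot flip the sign.
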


We give first a lemma, which will be used in the proof of the theorem.

\begin{lemma}\label{normcomp}
Let $B$ and $B'$ balls with the same radius such that $c_{B'}=c_B+4r_B\mathbf e_j$, where $\mathbf e_j$ is the unit vector in the direction of the $j$-th component. If $R_j$ is bounded on $X$, then $\|\chi_B\|_X\sim \|\chi_{B'}\|_X$.  
\end{lemma}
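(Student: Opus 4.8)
The plan is to exploit the sign and size of the kernel of $R_j$, namely $K(x,y)=(x_j-y_j)/|x-y|^{n+1}$, over the two well-separated balls. The displacement $c_{B'}=c_B+4r_B\mathbf e_j$ is arranged precisely so that the numerator $x_j-y_j$ stays bounded away from zero with a fixed sign while $|x-y|$ remains comparable to $r_B$. Writing $x=c_{B'}+u$ and $y=c_B+v$ with $|u|,|v|<r_B$, I record that $x_j-y_j=4r_B+u_j-v_j\in(2r_B,6r_B)$ and, since $|u-v|<2r_B$, also $|x-y|=|4r_B\mathbf e_j+u-v|\in(2r_B,6r_B)$. In particular $B$ and $B'$ are disjoint, as the distance $4r_B$ between their centers exceeds the sum $2r_B$ of the radii.

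Consequently, for $x\in B'$ and $y\in B$,
\[
K(x,y)=\frac{x_j-y_j}{|x-y|^{n+1}}\gtrsim\frac{r_B}{r_B^{n+1}}\sim\frac1{|B|}.
\]
Since each $x\in B'$ lies outside $\supp\chi_B$, the representation \eqref{kernelrepr} applies to the bounded compactly supported function $\chi_B$, and integrating the lower bound above over $y\in B$ gives $R_j\chi_B(x)\gtrsim 1$ for every $x\in B'$. Hence $|R_j\chi_B|\gtrsim\chi_{B'}$ pointwise. Using that $X$ is a Banach lattice containing $\chi_{B'}$, together with the boundedness of $R_j$ on $X$, I obtain
\[
\|\chi_{B'}\|_X\lesssim\|R_j\chi_B\|_X\lesssim\|\chi_B\|_X.
\]

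For the reverse inequality I would run the same argument with the roles of $B$ and $B'$ interchanged: now for $x\in B$ and $y\in B'$ the numerator $x_j-y_j$ is \emph{negative} and comparable to $-r_B$, so $R_j\chi_{B'}(x)\lesssim-1$ on $B$, whence $|R_j\chi_{B'}|\gtrsim\chi_B$ and $\|\chi_B\|_X\lesssim\|\chi_{B'}\|_X$. Combining the two bounds yields $\|\chi_B\|_X\sim\|\chi_{B'}\|_X$. There is no serious obstacle here; the only delicate point is the sign bookkeeping, that is, verifying that placing $B'$ along the $j$-th coordinate direction (rather than transversally) and separating the balls by more than twice their radius forces $x_j-y_j$ to keep a constant sign and to be bounded below in modulus by a multiple of $r_B$. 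This is exactly what makes the Riesz kernel behave like $\pm 1/|B|$ on the relevant region and drives the entire estimate.
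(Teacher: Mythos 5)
Your proof is correct and takes essentially the same route as the paper: in both arguments one observes that the Riesz kernel has constant sign and size comparable to $|B|^{-1}$ when the two variables range over the separated balls, deduces the pointwise bound $\chi_{B'}\lesssim |R_j\chi_B|$ (the paper states the symmetric bound $\chi_B\lesssim|R_j\chi_{B'}|$ first, which is immaterial), and then applies the lattice property of $X$ and the boundedness of $R_j$, finishing by exchanging the roles of $B$ and $B'$. Your explicit verification that $x_j-y_j$ and $|x-y|$ both lie in $(2r_B,6r_B)$ is exactly the computation the paper summarizes as $y_j-x_j\sim|x-y|\sim r_B$.
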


\begin{proof}
 Let $x\in B$ and $y\in B'$. We have $y_j>x_j$ and $y_j-x_j \sim |x-y| \sim r_B$. Then
 \begin{equation*}
|R_j\chi_{B'}(x)|= \int_{B'} \frac {y_j-x_j}{|x-y|^{n+1}} dy\sim 1. 
\end{equation*}
Hence $\chi_B (x) \lesssim |R_j\chi_{B'}(x)|$ and taking norms in $X$ and using the boundedness of $R_j$ we deduce that  $\|\chi_B\|_X\lesssim \|\chi_{B'}\|_X$. Changing the roles of $B$ and $B'$ we obtain the reverse inequality and the lemma is proved.
\end{proof}

\begin{remark}
 In the case of the Morrey spaces, the norm of $\chi_B$ in $L\mathcal M^{p}(\varphi,w)$ and in  $WL\mathcal M^{p}(\varphi,w)$ is the same. It is enough to assume that $R_j$ is bounded from $L\mathcal M^{p}(\varphi,w)$ to  $WL\mathcal M^{p}(\varphi,w)$ to get the conclusion of the lemma.
\end{remark}

\begin{proof}[Proof of Theorem \ref{necrieszteo}]
Let $B$ be a ball and $Q$ the cube with sides parallel to the coordinate axes and the same center and radius as $B$. Let $2Q$ be the cube with double radius. We can decompose $\rn\setminus 2Q$ into $2n$ regions, $\{P_j^+, P_j^-: j=1,\dots,n\}$ in such a way that for $x\in B$ and $y\in P_j^+$ (resp. $P_j^-$), we have
\begin{equation}\label{regions}
0<y_j-x_j\sim |x-y| \quad (\text{resp. }0<x_j-y_j\sim |x-y|).
\end{equation}
(See Figure \ref{figure1} for $n=2$.)

\begin{center}
\begin{figure}
\begin{tikzpicture}[scale=1]
\draw[thick, black] circle [radius=1];
\draw[dashed, black] (1,1) -- (-1,1) -- (-1,-1) -- (1,-1) -- (1,1);
\draw[thick, black] (2,2) -- (-2,2) -- (-2,-2) -- (2,-2) -- (2,2);
\draw[thick, black] (2,2) -- (4,4);
\draw[thick, black] (-2,2) -- (-4,4);
\draw[thick, black] (2,-2) -- (4,-4);
\draw[thick, black] (-2,-2) -- (-4,-4);
\draw (0.3, 0) node {$B$};
\draw (1.2, 0.3) node {$Q$};
\draw (1.5, 1.5) node {$2Q$};
\draw (3, 0) node {$P_1^+$};
\draw (-3, 0) node {$P_1^-$};
\draw (0,3) node {$P_2^+$};
\draw (0,-3) node {$P_2^-$};
\end{tikzpicture}
\caption{The decomposition used in the proof of Theorem \ref{necrieszteo} for $n=2$.}  
\label{figure1}
\end{figure}
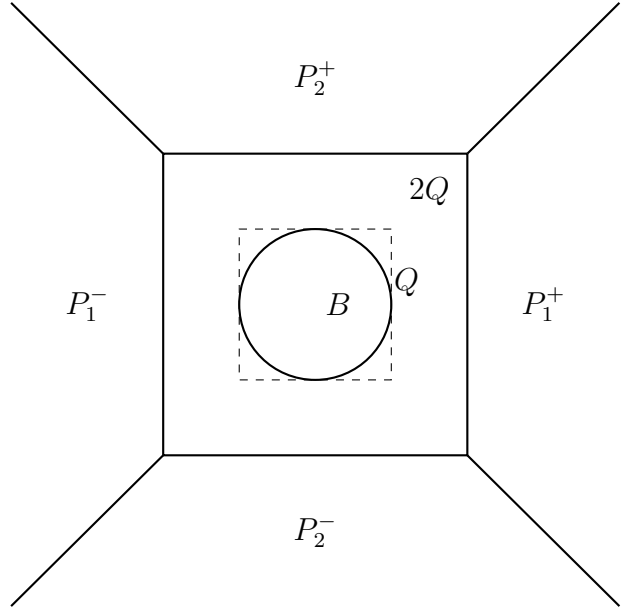
\end{center}

Let $f$ be a bounded compactly supported function. Then for $x\in B$ we have 
\begin{equation*}
|R_j(|f|\chi_{P_j^+})(x)|= \int_{P_j^+} \frac {y_j-x_j}{|x-y|^{n+1}} |f(y)|\,dy \sim  \int_{P_j^+} \frac {|f(y)|}{|c_B-y|^{n}} dy,
\end{equation*}
and similarly for $P_j^-$. Therefore,
\begin{equation*}
\int_{P_j^+\cup P_j^-} |f(y)|\frac {M\chi_B(y)}{|B|} dy \chi_B(x) \lesssim |R_j(|f|\chi_{P_j^+\cup P_j^-})(x)|.
\end{equation*}
Taking norms in $X$ and using the assumed boundedness of $R_j$ we deduce that
\begin{equation}\label{est31}
\frac 1 {|B|}\int_{P_j^+\cup P_j^-} |f(y)|{M\chi_B(y)} dy\  \|\chi_B\|_X \lesssim \|f\|_X.
\end{equation}

Let $B'$ be the ball with the same radius as $B$ and center $c_{B'}=c_B+4r_B\mathbf e_1$. For $x\in B'$ and $y\in 2Q$ we have $0<x_1-y_1 \sim r_B\sim |x-y|$. Then
\begin{equation*}
|R_1(|f|\chi_{2Q})(x)|= \int_{2Q} \frac {x_1-y_1}{|x-y|^{n+1}} |f(y)|dy \sim  \int_{2Q} \frac {|f(y)|}{r_B^{n}} dy.
\end{equation*}
Moreover, $M\chi_B(y) \sim 1$ for $y\in 2Q$. Hence
\begin{equation*}
\int_{2Q} |f(y)|\frac {M\chi_B(y)}{|B|} dy \ \chi_{B'}(x) \lesssim |R_1(|f|\chi_{2Q})(x)|.
\end{equation*}
Taking norms in $X$, using the boundedness of $R_1$, and Lemma \ref{normcomp} we obtain
\begin{equation*}
\frac 1 {|B|}\int_{2Q} |f(y)|{M\chi_B(y)} dy\  \|\chi_B\|_X \lesssim \|f\|_X.
\end{equation*}
Adding this estimate to the sum for $j=1,\dots,n$ of the estimates \eqref{est31} we get
\begin{equation}\label{eq37}
\frac 1 {|B|}\int_{\rn} |f(y)|{M\chi_B(y)} dy\  \|\chi_B\|_X \lesssim \|f\|_X.
\end{equation}

We obtained \eqref{eq37} for bounded functions with compact support. Given a general $f$ in $X$, for each $N>1$ we define 
\begin{equation*}
f_N(x) = \min \{|f(x)|,N\} \chi_{|x|\le N}(x).
\end{equation*}
Since $X$ is a Banach lattice, $f_N\in X$ and $\|f_N\|_X\le \|f\|_X$. We can apply \eqref{eq37} to  $f_N$ to obtain 
\begin{equation*}
\frac 1 {|B|}\int_{\rn} |f_N(y)|{M\chi_B(y)} dy\  \|\chi_B\|_X \lesssim \|f_N\|_X\le \|f\|_X.
\end{equation*}
Using the monotone convergence theorem we get \eqref{eq37} for every $f\in X$. 
Taking the supremum over the functions $f$ such that $\|f\|_X\le 1$ we obtain \eqref{necriesz}.
\end{proof}

%%%%%%%%%%%%%%%%%%%%%%%%%%%%%%%%%%%%%
\section{Calder\'on-Zygmund operators on weighted local Morrey spaces}\label{lau}

Calder\'on-Zygmund operators are first defined on smooth functions and can be extended by density to other spaces like weighted $L^p$ spaces, for instance. Nevertheless, due to the lack of density of smooth functions on the weighted local Morrey spaces used in the results of this section, we need to give an appropriate definition. To this end, we proceed as follows. Let $T$ be a Calder\'on-Zygmund operator associated to the kernel $K$ and $f\in L\mathcal M^{p}(\varphi,w)$ with $w$ satisfying \eqref{sicond}. Since \eqref{sicond} is stronger than \eqref{apmdef}, we know that $w\in A_{p,\textrm{loc}}$ (see \cite[Lemma 6.2]{DR21}). Let $B$ be a ball such that $|c_B|=6r_B$. Decompose $f$ as $f=f_1+f_2$, where $f_1=f\chi_{2B}$.   We have that $f_1$ is in $L^1$, hence $Tf_1$ is well defined. The integrability of $f_1$ is a consequence of the inequality
\begin{equation*}
\int_{2B} |f| \le \left(\int_{2B} |f|^p w\right)^{1/p} w^{1-p'}(2B)^{1/p'},
\end{equation*}
where the last term is finite because $w\in A_{p,\text{loc}}$. (For $p=1$ the last term is replaced by $\sup_{x\in 2B} w(x)^{-1}$.)

On the other hand, for $x\in B$, we define
\begin{equation}\label{deftf2}
Tf_2(x)=\int_{(2B)^c} K(x,y)f(y) dy.
\end{equation}
This integral is well defined because for $x\in B$ we have
\begin{equation*}
|K(x,y) \chi_{(2B)^c}(y)|\lesssim \frac 1{|B|} M\chi_B(y),
\end{equation*}
using the size condition \eqref{kernelsize} of the kernel $K$, and $M\chi_B\in  L\mathcal M^{p}(\varphi,w)'$ by \eqref{sicond}.  Moreover, we have the following pointwise estimate:
\begin{equation}\label{pointest}
\left|\int_{(2B)^c} K(x,y)f(y) dy\right|\lesssim  \frac 1{|B|} \|M\chi_B\|_{L\mathcal M^{p}(\varphi,w)'}\|f\|_{L\mathcal M^{p}(\varphi,w)}.
\end{equation}
Thus we define $Tf$ on $B$ as $Tf_1+Tf_2$. We need to check that if $B'$ is another ball such that $|c_{B'}|=6r_{B'}$ and $B\cap B'\ne \emptyset$, both definitions of $Tf$ coincide almost everywhere on $B\cap B'$, that is,
\begin{equation*}
T(f\chi_{2B})(x)+\int_{(2B)^c} K(x,y)f(y) dy = T(f\chi_{2B'})(x)+\int_{(2B')^c} K(x,y)f(y) dy,
\end{equation*}
for almost every $x\in B\cap B'$. This can be written as 
\begin{equation*}
\aligned
T(f(\chi_{2B}-\chi_{2B'}))(x) & = \int_{2B\setminus 2B'}K(x,y)f(y) dy- \int_{2B'\setminus 2B}K(x,y)f(y) dy\\
& = \int_{\rn}K(x,y)\, [f(\chi_{2B}-\chi_{2B'})](y) dy.
\endaligned
\end{equation*}    
The equality holds because $f(\chi_{2B}-\chi_{2B'})$ is in $L^1$ and $x\in B\cap B'$ is outside the support of $f$. Indeed, the  representation formula \eqref{kernelrepr} can be used for $L^1$ functions as proved in \cite[Proposition 8.2.2]{Gr14}. 

A priori estimates with Morrey norms are not enough to extend singular integral operators (and others) to the whole Morrey space. This question, disregarded in a number of papers, was addressed already in \cite{A96}, where duality was used to define the Calder\'on-Zygmund operator on the corresponding Morrey space. Since then different ways had been considered as can be seen, for instance, in \cite[Section 10.4, vol. I]{SFH20}. Our definition can be viewed as the natural extension to the weighted local Morrey space of the definition of the operator in $L^1$ in the sense that for functions in $L^1\cap  L\mathcal M^{p}(\varphi,w)$ they coincide. In \cite{H20}, for instance, a similar definition appears in a quite general context.

We notice that for $x\in B$ we have $2B\subset B(x,3/5|x|)$. Indeed, if $y\in 2B$, 
\begin{equation*}
|y-x|\le |y-c_B|+|c_B-x|\le 3r_B\le \frac 35 |x|,
\end{equation*}
because $|c_B|= 6r_B$. Then $f\chi_{2B}=(f\chi_{2B})\chi_{B(x,3/5|x|)}$, hence
\begin{equation}\label{deftf1}
T(f\chi_{2B})= T_{3/5}(f\chi_{2B})
\end{equation}
in the sense of \eqref{localizedcz}. By using \cite[Corollary 5.2]{HSV19} we know that $T_{3/5}$ is bounded on $L^p(w)$ for $w\in A_{p,\text{loc}}$ for $1<p<\infty$, and is of weak-type $(1,1)$ with respect to $w$.

\begin{theorem}\label{teocz}
  Let $1< p<\infty$. Let $T$ be a Calder\'on-Zygmund operator and let $w$ be a weight satisfying \eqref{sicond}.
 Then the operator $T$ defined as indicated is bounded on $L\mathcal M^{p}(\varphi,w)$. 
 
 Let $p=1$. Then $T$ is of weak type $(1,1)$, that is, it is bounded from $L\mathcal M^{1}(\varphi,w)$ to $WL\mathcal M^{1}(\varphi,w)$.
\end{theorem}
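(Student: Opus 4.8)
The plan is to prove the two boundedness statements by exploiting the decomposition $f = f_1 + f_2 = f\chi_{2B} + f\chi_{(2B)^c}$ that was already set up before the theorem, and by reducing the whole estimate to balls $B$ with $|c_B| = 6r_B$ via the norm-reduction Proposition \ref{newnorm} (with $\kappa = 1/6$). By that proposition and Remark \ref{rm22}, the Morrey norm of $Tf$ is comparable to the supremum over such balls of $\left(\varphi(\widetilde B)^{-1}\int_B |Tf|^p w\right)^{1/p}$, so it suffices to bound this quantity uniformly by $\|f\|_{L\mathcal M^{p}(\varphi,w)}$. I would split $\int_B |Tf|^p w \lesssim \int_B |Tf_1|^p w + \int_B |Tf_2|^p w$ and handle the two pieces separately.

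For the local part $Tf_1 = T(f\chi_{2B})$, the key observation is \eqref{deftf1}, namely $T(f\chi_{2B}) = T_{3/5}(f\chi_{2B})$ in the sense of the localized operator \eqref{localizedcz}. Since \eqref{sicond} implies $w \in A_{p,\mathrm{loc}}$, Theorem \ref{hsvcz} gives that $T_{3/5}$ is bounded on $L^p(w)$. Hence
\begin{equation*}
\frac{1}{\varphi(\widetilde B)}\int_B |Tf_1|^p w \lesssim \frac{1}{\varphi(\widetilde B)}\int_{\rn} |T_{3/5}(f\chi_{2B})|^p w \lesssim \frac{1}{\varphi(\widetilde B)}\int_{2B} |f|^p w.
\end{equation*}
Because $2B$ is again a ball with $r_{2B} \sim \kappa |c_{2B}|$, the right-hand side is controlled by $\|f\|_{L\mathcal M^{p}(\varphi,w)}^p$ after replacing $\varphi(\widetilde B)$ by $\varphi(\widetilde{2B})$ (they are comparable by the doubling property of $\varphi$). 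For the tail part $Tf_2$, I would use the pointwise estimate \eqref{pointest}, which bounds $|Tf_2(x)|$ for every $x \in B$ by a constant multiple of $|B|^{-1}\|M\chi_B\|_{L\mathcal M^{p}(\varphi,w)'}\|f\|_{L\mathcal M^{p}(\varphi,w)}$; integrating the $p$-th power over $B$ against $w$ and dividing by $\varphi(\widetilde B)$ gives
\begin{equation*}
\left(\frac{1}{\varphi(\widetilde B)}\int_B |Tf_2|^p w\right)^{1/p} \lesssim \frac{\|\chi_B\|_{L\mathcal M^{p}(\varphi,w)}\,\|M\chi_B\|_{L\mathcal M^{p}(\varphi,w)'}}{|B|}\,\|f\|_{L\mathcal M^{p}(\varphi,w)},
\end{equation*}
using \eqref{locchar} to identify $\left(w(B)/\varphi(\widetilde B)\right)^{1/p}$ with $\|\chi_B\|_{L\mathcal M^{p}(\varphi,w)}$. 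The fraction is exactly the quantity controlled by \eqref{sicond}, so this piece is bounded by $\|f\|_{L\mathcal M^{p}(\varphi,w)}$ as well. Combining the two estimates and taking the supremum over admissible balls yields the strong-type bound for $1 < p < \infty$.

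For $p = 1$ the argument is structurally the same, but the two terms must be measured in the weak norm \eqref{wnormdef}. The tail term is still controlled pointwise by \eqref{pointest} (which holds for $p=1$ via $M\chi_B \in L\mathcal M^{1}(\varphi,w)'$), and since a pointwise bound on $B$ dominates every level set, it contributes to the weak norm through \eqref{wcharac}. For the local term I would invoke the weak-$(1,1)$ half of Theorem \ref{hsvcz}: $T_{3/5}$ is of weak type $(1,1)$ with respect to $w \in A_{1,\mathrm{loc}}$, so $t\,w(\{x \in B : |Tf_1(x)| > t\})^{1/\,} \lesssim \int_{2B}|f|\,w / \,$ after the appropriate normalization by $\varphi(\widetilde B)$, and this again transfers to the $W\!L\mathcal M^{1}(\varphi,w)$ norm of $f$. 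Adding the two weak estimates (using that the weak norm of a sum is controlled by the sum of the weak norms, with the usual factor of $2$ in the level-set splitting $\{|Tf|>t\} \subset \{|Tf_1|>t/2\}\cup\{|Tf_2|>t/2\}$) gives the weak-type conclusion.

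The main obstacle I anticipate is not any single estimate but the bookkeeping that makes the reduction to balls with $|c_B| = 6r_B$ legitimate and uniform: one must verify that the constant $6$ is compatible both with the containment $2B \subset B(x, \tfrac35|x|)$ needed for \eqref{deftf1} and with the norm equivalence of Proposition \ref{newnorm}, and that all the $\varphi(\widetilde B)$-versus-$\varphi(\widetilde{2B})$ replacements are absorbed by the doubling and reverse-doubling hypotheses on $\varphi$. A secondary subtlety is ensuring the definition of $Tf$ is genuinely independent of the chosen covering ball (this was the consistency computation preceding the theorem), so that the quantity we are estimating is well defined before we bound it; I would note that this has already been established and simply cite it rather than repeating it.
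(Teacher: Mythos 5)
Your proposal is correct and follows essentially the same route as the paper's own proof: the decomposition $f=f_1+f_2$ on balls with $|c_B|=6r_B$, the identification $Tf_1=T_{3/5}f_1$ combined with Theorem \ref{hsvcz} for the local part, the pointwise bound \eqref{pointest} together with \eqref{locchar} and \eqref{sicond} for the tail, and the weak-type variants for $p=1$. The bookkeeping issues you flag (comparability of $\varphi(\widetilde B)$ and $\varphi(\widetilde{2B})$ via doubling, and the well-definedness of $Tf$) are handled exactly as you anticipate.
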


%%%%%%%%%%%%%%%%%%%%%%%%%%%%%%%%%%%%%%%%
\begin{proof}
Let $B$ be a ball such that $|c_B|=6r_B$. Decompose $f$ as $f_1+f_2$ as before. On the one hand, using \eqref{pointest} we have
\begin{equation*}
\aligned
\left(\frac 1{\varphi(B)} \int_B |Tf_2|^pw\right)^{\frac 1p} & \lesssim \left(\frac {w(B)}{\varphi(B)}\right)^{\frac 1p} \frac 1{|B|} \|M\chi_B\|_{L\mathcal M^{p}(\varphi,w)'}\|f\|_{L\mathcal M^{p}(\varphi,w)} \\
& \lesssim \|\chi_B\|_{L\mathcal M^{p}(\varphi,w)}  \frac 1{|B|} \|M\chi_B\|_{L\mathcal M^{p}(\varphi,w)'}\|f\|_{L\mathcal M^{p}(\varphi,w)}.
\endaligned
\end{equation*}

On the other hand, taking into account \eqref{deftf1}  and that $T_{3/5}$ is bounded on $L^p(w)$ for $w\in A_{p,\text{loc}}$, we have
\begin{equation*}
\aligned
\frac 1{\varphi(B)} \int_B |Tf_1|^pw & = \frac 1{\varphi(B)} \int_B |T_{3/5}f_1|^pw \\
& \lesssim   \frac 1{\varphi(B)} \int_{2B} |f|^pw \lesssim  \|f\|_{L\mathcal M^{p}(\varphi,w)}^p.
\endaligned
\end{equation*}
The theorem is proved for $1<p<\infty$.

In the case $p=1$ the part corresponding to $Tf_2$ is still valid, and for $Tf_1$ we replace the $L^p(w)$-boundedness by the weak-type (1,1) estimate for $T_{3/5}$.
\end{proof}

Combining this result with the necessary condition of the previous section we obtain the following corollary. 

\begin{corollary}
The Riesz transforms are bounded on $L\mathcal M^{p}(\varphi,w)$ for $1<p<\infty$ if and only if $w$ satisfies \eqref{sicond}. The Riesz transforms are bounded from $L\mathcal M^{1}(\varphi,w)$ to $WL\mathcal M^{1}(\varphi,w)$ if and only if \eqref{sicond} holds for $p=1$.
\end{corollary}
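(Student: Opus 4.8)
The plan is to obtain the corollary as a synthesis of the sufficiency result of Theorem \ref{teocz} and the necessity result of Theorem \ref{necrieszteo}, verifying in each case that the Riesz transforms and the space $L\mathcal M^{p}(\varphi,w)$ meet the required hypotheses, and then treating the weak-type endpoint $p=1$ with a little extra care. No genuinely new computation should be needed; the work is in matching the two abstract statements to the concrete situation.

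For the sufficiency when $1<p<\infty$, I would observe that each Riesz transform $R_j$ is a Calder\'on-Zygmund operator whose kernel satisfies the size and regularity conditions, so it is covered by Theorem \ref{teocz}. Since the extension of $R_j$ to $L\mathcal M^{p}(\varphi,w)$ described in Section \ref{lau} coincides with the classical principal-value operator on bounded compactly supported functions, the hypothesis \eqref{sicond} on $w$ yields immediately that $R_j$ is bounded on $L\mathcal M^{p}(\varphi,w)$, and hence so are all the Riesz transforms. For $p=1$ the weak-type part of Theorem \ref{teocz} gives boundedness from $L\mathcal M^{1}(\varphi,w)$ to $WL\mathcal M^{1}(\varphi,w)$.

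For the necessity when $1<p<\infty$, I would apply Theorem \ref{necrieszteo} with $X=L\mathcal M^{p}(\varphi,w)$. This requires checking that $X$ is a Banach lattice of measurable functions containing the characteristic functions of balls: the lattice property is built into the definition \eqref{normdef} of the norm, and the finiteness of $\|\chi_B\|_X$ for every ball follows from the local integrability of $w$ together with the properties of $\varphi$ (compare \eqref{locchar}). Assuming the Riesz transforms bounded on $X$, Theorem \ref{necrieszteo} delivers \eqref{necriesz}, which for this choice of $X$ is verbatim the condition \eqref{sicond}. It is worth noting that the necessity argument uses $R_j$ only on bounded compactly supported functions and only through the off-diagonal kernel representation, so it is insensitive to the particular extension adopted in Section \ref{lau}.

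The one point requiring real attention, and the only place where the corollary is not a literal citation of the two theorems, is the necessity at the endpoint $p=1$, since $WL\mathcal M^{1}(\varphi,w)$ is not among the Banach lattices handled by Theorem \ref{necrieszteo}. Here I would re-run the proof of that theorem with the left-hand sides measured in the weak-type quasi-norm. The crucial observation is that the functions whose norms are taken there are of the form (a nonnegative constant) times $\chi_B$, so by \eqref{wcharac} their $WL\mathcal M^{1}$-norm equals their $L\mathcal M^{1}$-norm; consequently Lemma \ref{normcomp} and every subsequent estimate remain valid under the weaker hypothesis that $R_j$ maps $L\mathcal M^{1}(\varphi,w)$ into $WL\mathcal M^{1}(\varphi,w)$, exactly as anticipated in the remark following Lemma \ref{normcomp}. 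Passing to the supremum over $f$ with $\|f\|_{L\mathcal M^{1}(\varphi,w)}\le 1$ then reproduces \eqref{sicond} for $p=1$, and the equivalence is complete.
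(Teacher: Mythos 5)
Your proposal is correct and follows the paper's own route exactly: the corollary is obtained by combining the sufficiency of Theorem \ref{teocz} with the necessity of Theorem \ref{necrieszteo}, and the endpoint $p=1$ necessity is handled precisely as the paper intends via \eqref{wcharac} and the remark following Lemma \ref{normcomp}, since the functions whose norms are estimated are multiples of characteristic functions. Your additional checks (that $L\mathcal M^{p}(\varphi,w)$ is a Banach lattice containing characteristic functions of balls, and that the necessity argument only uses the kernel representation off the support) are exactly the details the paper leaves implicit.
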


%%%%%%%%%%%%%%%%%%%%%%%%%%%%%
\section{Necessary conditions for fractional operators}\label{bost}

We recall that the fractional maximal operator $M_\alpha$ is defined for locally integrable functions in $\rn$ as in \eqref{defmaxloc}, but without restrictions on the balls, that is, the supremum is taken over all balls containing $x$. The fractional integral $I_\alpha$ is defined as in \eqref{deffracintloc} with the integral extended to all of $\rn$. We also recall the pointwise bound $M_\alpha f(x) \le I_\alpha f(x)$.

\begin{theorem}\label{teonecfrac} 
Let $X$ be a Banach lattice and $Y$ a normed space.

(a) If the fractional maximal operator $M_\alpha$ is bounded from $X$ to $Y$, then 
 \begin{equation}\label{necfracmax}
\sup_B \frac{\|\chi _B\|_{Y}\|\chi _B\|_{X'}}{|B|^{1-\alpha/n}} <\infty.
\end{equation}
(b) If the fractional integral operator $I_\alpha$ is bounded from $X$ to $Y$, then 
 \begin{equation}\label{necfracint}
\sup_B \frac{\|\chi _B\|_{Y}\|(M\chi _B)^{1-\alpha/n}\|_{X'}}{|B|^{1-\alpha/n}} <\infty.
\end{equation}
\end{theorem}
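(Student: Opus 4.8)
```latex
\textbf{Proof proposal for Theorem \ref{teonecfrac}.}

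The plan is to exploit a lower pointwise bound for $M_\alpha \chi_B$ and $I_\alpha \chi_B$ on suitable regions, combined with the defining duality of the K\"othe dual. This mirrors the strategy of Theorem \ref{necrieszteo}, but is simpler because the kernels are positive and no cancellation needs to be tracked.

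For part (a), I would start by testing the boundedness on a characteristic function. Fix a ball $B$ and apply the assumed bound $\|M_\alpha f\|_Y \lesssim \|f\|_X$ to $f=g\chi_B$ for an arbitrary nonnegative $g\in X$. The key observation is the pointwise lower bound: for $x\in B$, using $B$ itself as the competitor in the supremum defining $M_\alpha$,
\begin{equation*}
M_\alpha(g\chi_B)(x)\ge \frac{1}{|B|^{1-\alpha/n}}\int_B g.
\end{equation*}
Hence $\chi_B(x)\,\frac{1}{|B|^{1-\alpha/n}}\int_B g \le M_\alpha(g\chi_B)(x)$ pointwise. Taking $Y$-norms and using the boundedness of $M_\alpha$ gives
\begin{equation*}
\frac{1}{|B|^{1-\alpha/n}}\left(\int_B g\right)\|\chi_B\|_Y \lesssim \|g\chi_B\|_X \le \|g\|_X.
\end{equation*}
Now I would take the supremum over all $g$ with $\|g\|_X\le 1$; by the definition \eqref{kothe} of the K\"othe dual, $\sup_{\|g\|_X\le 1}\int_B g = \|\chi_B\|_{X'}$. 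Rearranging yields \eqref{necfracmax}, and the supremum over $B$ follows since the constant is uniform.

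For part (b), the structure is identical but the test function must be adapted to the fractional integral kernel so that $(M\chi_B)^{1-\alpha/n}$ appears. The idea is to use $f=g\,\chi_B$ again, but instead of testing $I_\alpha f$ on $B$, I would test it on an annular region away from $B$ where the kernel $|x-y|^{-(n-\alpha)}$ is comparable to $(M\chi_B(x))^{(n-\alpha)/n}$. Concretely, for $y\in B$ and $x$ at distance $\sim R$ from $c_B$ (with $R\ge r_B$), one has $|x-y|\sim R$ and $M\chi_B(x)\sim (r_B/R)^n$, so $|x-y|^{-(n-\alpha)}\sim R^{-(n-\alpha)}\sim r_B^{-(n-\alpha)}(M\chi_B(x))^{(n-\alpha)/n}$. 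This gives, for such $x$,
\begin{equation*}
I_\alpha(g\chi_B)(x)=\int_B \frac{g(y)}{|x-y|^{n-\alpha}}\,dy \gtrsim \frac{(M\chi_B(x))^{1-\alpha/n}}{|B|^{1-\alpha/n}}\int_B g,
\end{equation*}
using $r_B^{-(n-\alpha)}\sim |B|^{-(1-\alpha/n)}$ and $(M\chi_B(x))^{(n-\alpha)/n}=(M\chi_B(x))^{1-\alpha/n}$. Thus $(M\chi_B(x))^{1-\alpha/n}\,\frac{1}{|B|^{1-\alpha/n}}\int_B g \lesssim I_\alpha(g\chi_B)(x)$ holds pointwise on the relevant region, and since $M\chi_B\sim 1$ on $B$ itself, the bound extends (up to constants) to all of $\rn$. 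Taking $Y$-norms, invoking the boundedness of $I_\alpha$, and then passing to the supremum over $\|g\|_X\le 1$ produces $\|(M\chi_B)^{1-\alpha/n}\|_{X'}$ on the left and $\|\chi_B\|_Y$ as a factor, yielding \eqref{necfracint}.

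The main obstacle I anticipate is the verification in part (b) that the pointwise bound involving $(M\chi_B)^{1-\alpha/n}$ is valid uniformly across all distance scales $R$, and in particular checking that the contribution near $B$ (where $M\chi_B\sim 1$) and far from $B$ (where $M\chi_B$ decays like $(r_B/|x|)^n$) are both correctly captured with a single comparable lower bound. One must be careful that the comparison $|x-y|\sim |x-c_B|$ for $y\in B$ only holds when $x$ is not too close to $B$, so the estimate should be organized dyadically in annuli $2^k B\setminus 2^{k-1}B$ and the pointwise inequality established on each annulus separately before assembling. I would take $Y$-norms only after establishing the pointwise inequality $(M\chi_B)^{1-\alpha/n}\,|B|^{-(1-\alpha/n)}\int_B g \lesssim I_\alpha(g\chi_B)$ valid a.e. on $\rn$, since the lattice property of $Y$ (or merely monotonicity of the norm for the nonnegative quantities involved) is what then allows the passage to norms.
```
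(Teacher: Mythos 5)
Your part (a) is correct and is essentially the paper's own argument; the restriction to $f=g\chi_B$ is harmless but unnecessary, since $M_\alpha f(x)\ge |B|^{\alpha/n-1}\int_B|f|$ already holds for every $x\in B$ and every $f$, and the supremum over $\|f\|_X\le 1$ of $\int_B|f|$ is by definition \eqref{kothe} equal to $\|\chi_B\|_{X'}$.

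Part (b), however, contains a genuine error: you have interchanged the roles of the evaluation point and the integration variable, and as a result your computation does not yield \eqref{necfracint}. In your setup the test function $g\chi_B$ is supported in $B$, so the integration variable $y$ lives in $B$, while the pointwise lower bound
\begin{equation*}
\bigl(M\chi_B(x)\bigr)^{1-\alpha/n}\,\frac{1}{|B|^{1-\alpha/n}}\int_B g \;\lesssim\; I_\alpha(g\chi_B)(x)
\end{equation*}
holds for the evaluation point $x$ ranging over $\rn$. Taking $Y$-norms in $x$ therefore produces $\|(M\chi_B)^{1-\alpha/n}\|_Y$, not $\|\chi_B\|_Y$, and the supremum of $\int_B g$ over $\|g\|_X\le 1$ gives $\|\chi_B\|_{X'}$, not $\|(M\chi_B)^{1-\alpha/n}\|_{X'}$. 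What you actually prove is
\begin{equation*}
\sup_B \frac{\|(M\chi _B)^{1-\alpha/n}\|_{Y}\,\|\chi _B\|_{X'}}{|B|^{1-\alpha/n}} <\infty,
\end{equation*}
which is a different (also necessary) condition, with the two norms sitting in the wrong spaces; for general $X$ and $Y$ there is no reason for it to coincide with \eqref{necfracint}, and it is \eqref{necfracint} that matches the sufficient condition in Theorem \ref{teosuffrac1}. The correct argument is the mirror image of yours: keep the evaluation point $x$ in $B$, take $f$ arbitrary (not supported in $B$), and bound the kernel from below in the integration variable. For $x\in B$ one has $|x-y|\lesssim r_B+|c_B-y|$ and $M\chi_B(y)\sim |B|/(r_B+|c_B-y|)^n$, whence $|x-y|^{-(n-\alpha)}\gtrsim \bigl(M\chi_B(y)/|B|\bigr)^{1-\alpha/n}$ for all $y$, so that
\begin{equation*}
\Bigl[\frac{1}{|B|^{1-\alpha/n}}\int |f|\,(M\chi_B)^{1-\alpha/n}\Bigr]\chi_B(x)\;\lesssim\; I_\alpha(|f|)(x).
\end{equation*}
Now the $Y$-norm yields $\|\chi_B\|_Y$ and the supremum over $\|f\|_X\le 1$ yields $\|(M\chi_B)^{1-\alpha/n}\|_{X'}$, as required. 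Note that this single elementary kernel inequality handles all scales at once, so the dyadic decomposition into annuli you anticipate is not needed.
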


\begin{proof}
(a) Let $B$ be a ball. By definition of $M_\alpha$ we have
\begin{equation*}
\frac 1{|B|^{1-\alpha/n}}\int_B |f| \le M_\alpha f(x)\quad \text{for all } x\in B,
\end{equation*}
that is,
\begin{equation*}
\left(\frac 1{|B|^{1-\alpha/n}}\int_B |f|\right) \chi_B(x) \le M_\alpha f(x).
\end{equation*}
Taking the norm in $Y$ and assuming that $M_\alpha$ is bounded from $X$ to $Y$,
\begin{equation*}
\left(\frac 1{|B|^{1-\alpha/n}}\int \chi_B |f|\right)\|\chi_B\|_Y \le \|M_\alpha f\|_Y\lesssim \|f\|_X.
\end{equation*}
Taking the supremum over $f$ such that $\|f\|_X\le 1$ we obtain \eqref{necfracmax}.

(b) Let $B$ be a ball and $x\in B$. Then
$|x-y|\lesssim r_B + |c_B-y|$. Hence,
\begin{equation*}
M\chi_B(y) \sim \frac {|B|}{(r_B + |c_B-y|)^n}\lesssim  \frac {|B|}{|x-y|^n}.
\end{equation*}
Therefore, for $x\in B$,
\begin{equation*}
I_\alpha (|f|)(x) = \int \frac {|f(y)|}{|x-y|^{n-\alpha}}\,dy\gtrsim \int |f(y)| \left(\frac {M\chi_B(y)}{|B|}\right)^{1-\frac \alpha n}\,dy.
\end{equation*}
We deduce that
\begin{equation*}
\left[\frac {1}{|B|^{1-\frac \alpha n}} \int |f| (M\chi_B)^{1-\frac \alpha n}\right] \chi_B(x) \lesssim I_\alpha (|f|)(x),
\end{equation*}
and proceeding as in part (a) we obtain \eqref{necfracint}.
\end{proof}

We obtain in the following theorem a restriction on the exponents of the local Morrey spaces for the boundedness of the fractional operators. It is enough to state it for $M_\alpha$ because of the pointwise bound between $M_\alpha$ and $I_\alpha$.

\begin{theorem}\label{teonecrange}
If the fractional maximal operator $M_\alpha$ is bounded from $\mathcal {LM}^{p}(\varphi,u)$ to $W\mathcal {LM}^{q}(\psi,v)$, then 
 \begin{equation}\label{necrange}
\frac 1p -\frac 1q \le \frac \alpha n.
\end{equation}
\end{theorem}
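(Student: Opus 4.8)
The plan is to test the operator on the characteristic function of a \emph{small} ball located at a fixed positive distance from the origin, and let the radius shrink to zero. The decisive feature of the local Morrey norm is that, for a ball $B$ with $r_B\le\kappa|c_B|$, the normalizing factor in \eqref{locchar} is $\varphi(\widetilde B)$, which depends only on the distance $|c_B|$ and not on the radius $r_B$. Freezing that distance while shrinking the radius therefore neutralizes the weight and isolates the pure radial scaling, which is exactly where the Sobolev exponent appears.

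Concretely, I would fix a point $c\ne 0$ that is a Lebesgue point of both $u$ and $v$ with $u(c),v(c)\in(0,\infty)$ (almost every point works once we recall weights are positive a.e.; otherwise choose $c$ in $\{u>0\}\cap\{v>0\}$), and set $T=|c|$. For small $\varepsilon>0$ let $B_\varepsilon=B(c,\varepsilon)$, so that $r_{B_\varepsilon}=\varepsilon\le\kappa T=\kappa|c_{B_\varepsilon}|$ and $\widetilde{B_\varepsilon}=B(0,T+\varepsilon)$. By the doubling of $\varphi$ and $\psi$ one has $\varphi(\widetilde{B_\varepsilon})\sim\varphi(B(0,T))$ and $\psi(\widetilde{B_\varepsilon})\sim\psi(B(0,T))$ uniformly in small $\varepsilon$, while the Lebesgue point property gives $u(B_\varepsilon)\sim u(c)\,\varepsilon^n$ and $v(B_\varepsilon)\sim v(c)\,\varepsilon^n$. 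Hence, using \eqref{locchar}, \eqref{wcharac} and Remark \ref{rm22},
\begin{equation*}
\|\chi_{B_\varepsilon}\|_{\mathcal {LM}^{p}(\varphi,u)}\sim\left(\frac{u(B_\varepsilon)}{\varphi(\widetilde{B_\varepsilon})}\right)^{1/p}\sim C_1\,\varepsilon^{n/p},
\qquad
\|\chi_{B_\varepsilon}\|_{W\mathcal {LM}^{q}(\psi,v)}\sim\left(\frac{v(B_\varepsilon)}{\psi(\widetilde{B_\varepsilon})}\right)^{1/q}\sim C_2\,\varepsilon^{n/q},
\end{equation*}
with $C_1,C_2>0$ independent of $\varepsilon$. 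On the output side I would use the elementary pointwise bound $M_\alpha\chi_{B_\varepsilon}(x)\ge|B_\varepsilon|^{\alpha/n}\sim\varepsilon^\alpha$ for $x\in B_\varepsilon$, so that $M_\alpha\chi_{B_\varepsilon}\gtrsim\varepsilon^\alpha\chi_{B_\varepsilon}$; since the weak-type local Morrey norm \eqref{wnormdef} is monotone in $|f|$ and positively homogeneous, this yields $\|M_\alpha\chi_{B_\varepsilon}\|_{W\mathcal {LM}^{q}(\psi,v)}\gtrsim\varepsilon^\alpha\|\chi_{B_\varepsilon}\|_{W\mathcal {LM}^{q}(\psi,v)}\sim C_2\,\varepsilon^{\alpha+n/q}$.

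Combining these with the assumed boundedness $\|M_\alpha\chi_{B_\varepsilon}\|_{W\mathcal {LM}^{q}(\psi,v)}\lesssim\|\chi_{B_\varepsilon}\|_{\mathcal {LM}^{p}(\varphi,u)}$ gives $C_2\,\varepsilon^{\alpha+n/q}\lesssim C_1\,\varepsilon^{n/p}$, that is $\varepsilon^{\alpha+n/q-n/p}\lesssim 1$ for all small $\varepsilon>0$. Letting $\varepsilon\to 0^+$ forces $\alpha+n/q-n/p\ge 0$, which is precisely \eqref{necrange}. The only delicate point is making sure the weights do not spoil the power-counting; this is handled exactly by the observation above, namely that the denominators $\varphi(\widetilde{B_\varepsilon})$, $\psi(\widetilde{B_\varepsilon})$ are governed by the fixed distance $T$ and stay comparable to constants as $\varepsilon\to0$, while the numerators contribute the clean factor $\varepsilon^n$ at a Lebesgue point. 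Thus the surviving powers of $\varepsilon$ are only the ``flat'' $\varepsilon^{n/p}$, $\varepsilon^{n/q}$ together with the fractional gain $\varepsilon^\alpha$, and the inequality among them is equivalent to the Sobolev restriction $1/p-1/q\le\alpha/n$.
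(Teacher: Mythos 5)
Your proposal is correct and follows essentially the same route as the paper's proof: test on $\chi_{B(x,r)}$ for a small ball at fixed distance from the origin where $x$ is a Lebesgue point of $u$ and $v$, use the pointwise bound $r^\alpha\chi_{B_r}\le M_\alpha\chi_{B_r}$ together with \eqref{locchar} and \eqref{wcharac}, and let $r\to 0$ so that $u(B_r)/r^n$ and $v(B_r)/r^n$ converge to finite positive limits while the $\varphi$, $\psi$ factors stay fixed. The only cosmetic difference is that the paper normalizes with $\widehat B=B(0,2|x|)$ rather than $\widetilde{B_r}$, which is the same thing up to doubling.
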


\begin{proof}
Let $x\ne 0$ be a Lebesgue point of both $u$ and $v$, such that $v(x)\ne 0$. Let $B_r=B(x,r)$ with $4r\le |x|$. Let $\widehat B=B(0,2|x|)$. Then
\begin{equation*}
\|\chi_{B_r}\|_{\mathcal {LM}^{p}(\varphi,u)}\sim  \left(\frac {u(B_r)}{\varphi(\widetilde B_r)}\right)^{1/p}\sim  \left(\frac {u(B_r)}{\varphi(\widehat B)}\right)^{1/p},
\end{equation*}
and
\begin{equation*}
\|\chi_{B_r}\|_{W\mathcal {LM}^{q}(\psi,v)}=  \|\chi_{B_r}\|_{\mathcal {LM}^{q}(\psi,v)}\sim  \left(\frac {v(B_r)}{\psi(\widehat B)}\right)^{1/q}.
\end{equation*}
Using that $r^\alpha \chi_{B_r}(y) \le M_\alpha \chi_{B_r}(y)$, from the boundedness assumption we deduce
\begin{equation*}
r^\alpha  \left(\frac {v(B_r)}{\psi(\widehat B)}\right)^{1/q} \lesssim \left(\frac {u(B_r)}{\varphi(\widehat B)}\right)^{1/p},
\end{equation*}
which can be written as
\begin{equation}\label{seisei}
r^{\alpha+n/q-n/p}  \left(\frac {v(B_r)}{r^n}\right)^{1/q} \lesssim \left(\frac {u(B_r)}{r^n}\right)^{1/p}\frac{\psi(\widehat B)^{1/q}}{\varphi(\widehat B)^{1/p}}.
\end{equation}
Since $x$ is a Lebesgue point of $u$ and $v$, we have 
\begin{equation*}
\lim_{r\to 0} \frac {v(B_r)}{r^n}=v(x) \quad \text{and} \quad \lim_{r\to 0} \frac {u(B_r)}{r^n}=u(x).
\end{equation*}
Hence, letting $r$ tend to zero in \eqref{seisei}, we see that the inequality can only hold if $\alpha+n/q-n/p\ge 0$.
\end{proof}

This theorem extends to a general weighted setting a result which is known in the unweighted setting with $\varphi(B)=\psi(B)=r_B^\lambda$. A counterexample is in \cite{KFS17}; see also \cite[Example 158]{SFH20}. This is in contrast with the case of global Morrey spaces, where Adams extended the unweighted result to the range $1/p-1/q=\alpha/(n-\lambda)$. For weighted inequalities in the Adams range on global Morrey spaces see \cite{IKS11} and  \cite{NST18}. Let us mention that it is possible to obtain an Adams-type result on (unweighted) local Morrey spaces if we restrict the action of the operator to radial functions (see \cite{KFS17} and \cite{KFS17b}).

%%%%%%%%%%%%%%%%%%%%%%%%%%%%%%%%%%%
\section{Sufficient conditions for fractional operators}\label{sei}

Before characterizing the weights for $M_{\alpha}$ we prove a lemma, similar to Lemma 6.2  in \cite{DR21}. We use the notation $\varphi^p$ to denote the function such that $\varphi^p(B)=\varphi(B)^p$ as mentioned in the introduction.

\begin{lemma}\label{lema61}
Let $1\le p<n/\alpha$ and $1/p-1/q=\alpha/n$.  Then $w\in A_{p,q,\text{loc}}$ as defined in \eqref{defapqloc} if and only if $w$ satisfies \eqref{suffracmax1} restricted to the balls $B$ such that $r_B\le \kappa |c_B|$ for some fixed $\kappa\in (0,1)$. 
\end{lemma}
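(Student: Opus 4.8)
The plan is to turn each factor appearing in the numerator of \eqref{suffracmax1} into an explicit expression in $w$ and $\varphi$, using that on balls $B$ with $r_B\le\kappa|c_B|$ the local Morrey norm of a function supported in $B$ collapses, by \eqref{locloc}, to a weighted $L^p$ norm. Once both factors are evaluated, the powers of $\varphi(\widetilde B)$ cancel and the quotient in \eqref{suffracmax1} becomes comparable to the $A_{p,q,\text{loc}}$ quotient in \eqref{defapqloc}. The equivalence then follows after taking suprema over the admissible balls.

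For the first factor, since $r_B\le\kappa|c_B|$, I would apply \eqref{locchar} with $\varphi^q$ and $w^q$ in place of $\varphi$ and $w$, together with $\varphi^q(\widetilde B)=\varphi(\widetilde B)^q$, to get
\[
\|\chi_B\|_{L\mathcal M^q(\varphi^q,w^q)}\sim\left(\frac{w^q(B)}{\varphi^q(\widetilde B)}\right)^{1/q}=\frac{w^q(B)^{1/q}}{\varphi(\widetilde B)}.
\]
For the Köthe dual factor I would first reduce the supremum defining \eqref{kothe} to functions supported in $B$: for any $f$, replacing $f$ by $f\chi_B$ leaves $\int_B|f|$ unchanged and does not increase $\|f\|_{L\mathcal M^p(\varphi^p,w^p)}$ because the space is a Banach lattice. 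For such $f$, formula \eqref{locloc} gives $\|f\|_{L\mathcal M^p(\varphi^p,w^p)}\sim\varphi(\widetilde B)^{-1}\bigl(\int_B|f|^pw^p\bigr)^{1/p}$, whence
\[
\|\chi_B\|_{L\mathcal M^p(\varphi^p,w^p)'}\sim\sup\left\{\int_B|f|:\Bigl(\int_B|f|^pw^p\Bigr)^{1/p}\le\varphi(\widetilde B)\right\}.
\]
This is exactly the sharp weighted $L^p$ duality on $B$, obtained by writing $\int_B|f|=\int_B(|f|w)(w^{-1})$ and applying H\"older's inequality (which is attained), and it evaluates to $\varphi(\widetilde B)\,w^{-p'}(B)^{1/p'}$. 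If $w^{-p'}(B)=\infty$ both this quantity and the $A_{p,q,\text{loc}}$ quotient are infinite, so the two conditions stay consistent.

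Multiplying the two expressions, the factors $\varphi(\widetilde B)$ cancel and
\[
\frac{\|\chi_B\|_{L\mathcal M^q(\varphi^q,w^q)}\,\|\chi_B\|_{L\mathcal M^p(\varphi^p,w^p)'}}{|B|^{1-\alpha/n}}\sim\frac{w^q(B)^{1/q}\,w^{-p'}(B)^{1/p'}}{|B|^{1-\alpha/n}}.
\]
Taking the supremum over the balls with $r_B\le\kappa|c_B|$ identifies \eqref{suffracmax1} (restricted to these balls) with the $A_{p,q,\text{loc}}$ quotient in \eqref{defapqloc}; that fixing a single $\kappa$ is harmless follows from the $\kappa$-independence of the $A_{p,q,\text{loc}}$ class recorded after \eqref{defa1loc}. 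For $p=1$ the only change is that the dual computation uses the exponent $p'=\infty$, which replaces $w^{-p'}(B)^{1/p'}$ by $(\inf_{x\in B}w(x))^{-1}$, and the resulting quotient matches \eqref{defa1loc}. The step requiring the most care is the Köthe dual computation, namely justifying the reduction to functions supported in $B$ and then invoking \eqref{locloc}; this is precisely where the hypothesis $r_B\le\kappa|c_B|$ is essential, since \eqref{locloc} fails for balls that are not separated from the origin.
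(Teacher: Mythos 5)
Your argument is correct and follows essentially the same route as the paper: evaluate $\|\chi_B\|_{L\mathcal M^{q}(\varphi^q,w^q)}$ via \eqref{locchar}, show $\|\chi_B\|_{L\mathcal M^{p}(\varphi^p,w^p)'}\sim\varphi(\widetilde B)\,w^{-p'}(B)^{1/p'}$ by H\"older together with the extremal function $w^{-p'}\chi_B$ (whose norm is computed with \eqref{locloc}), and multiply so that the $\varphi(\widetilde B)$ factors cancel. Your preliminary reduction of the K\"othe-dual supremum to functions supported in $B$ is a harmless variant of the paper's direct estimate of $\int_B|f|$, and your $p=1$ case (with $(\operatorname{ess\,inf}_B w)^{-1}$ in place of $w^{-p'}(B)^{1/p'}$) matches the paper's treatment via the test sets $E=\{x\in B:w(x)\le\inf_B w+\epsilon\}$.
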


\begin{proof}
Let $B$ be a ball such that $r_B\le \kappa |c_B|$. We have from \eqref{locchar} that 
\begin{equation}\label{charq} 
\|\chi_B\|_{\mathcal {LM}^{q}(\varphi^q,w^q)}\sim \left(\frac{w^q(B)}{\varphi^q(\widetilde B)}\right)^{1/q}=\frac{w^q(B)^{1/q}}{\varphi (\widetilde B)}.
\end{equation}

Let $p>1$. We claim that 
\begin{equation}\label{normprime}
\|\chi_B\|_{\mathcal {LM}^{p}(\varphi^p,w^p)'}\sim w^{-p'}(B)^{1/p'}\varphi(\widetilde{B}).
\end{equation}

On the one hand we have 
\begin{equation*}
\|\chi_B\|_{\mathcal {LM}^{p}(\varphi^p,w^p)'}\ge \frac{1}{\|w^{-p'}\chi_B\|_{\mathcal {LM}^{p}(\varphi^p,w^p)}}\int_B w^{-p'} \sim
 w^{-p'}(B)^{1/p'} \varphi(\widetilde{B}),
\end{equation*}
where we used \eqref{locloc} for the norm of $w^{-p'}\chi_B$.

On the other hand, 
\begin{equation*}
%\aligned
\int_B |f|\le \left(\int_B |f|^p w^p\right)^{1/p} w^{-p'}(B)^{1/p'} 
\le  \varphi(\widetilde{B}) \|f\|_{\mathcal {LM}^{p}(\varphi^p,w^p)}
w^{-p'}(B)^{1/p'}.
%\endaligned
\end{equation*}
Taking the supremum over the functions $f$ with $\|f\|_{\mathcal {LM}^{p}(\varphi^p,w^p)}\le 1$ we prove the claim \eqref{normprime}.

Therefore,
\begin{equation*}
\frac{\|\chi_B\|_{\mathcal {LM}^{q}(\varphi^q,w^q)} \|\chi_B\|_{\mathcal {LM}^{p}(\varphi^p,w^p)'}}{|B|^{1-\alpha/n}} \sim 
\frac{w^q(B)^{1/q} w^{-p'}(B)^{1/p'}} {|B|^{1-\alpha/n}},
\end{equation*}
and this proves the lemma for $p>1$.

Let $p=1$.  We have \eqref{charq} with $q=n/(n-\alpha)$. Let $w\in A_{1,n/(n-\alpha),\text{loc}}$. Then 
\begin{equation*}
\aligned
\int_B |f| & \lesssim \int_B |f|w\  \left(\frac{|B|}{w^{n/(n-\alpha)}(B)}\right)^{1-\alpha/n}\\
& \le \varphi(\widetilde{B})\left(\frac{|B|}{w^{n/(n-\alpha)}(B)}\right)^{1-\alpha/n} \|f\|_{\mathcal {LM}^{1}(\varphi,w)}.
\endaligned
 \end{equation*}
 From here we obtain
 \begin{equation*}
\|\chi_B\|_{\mathcal {LM}^{1}(\varphi,w)'}\lesssim \varphi(\widetilde{B})\left(\frac{|B|}{w^{n/(n-\alpha)}(B)}\right)^{1-\alpha/n},
 \end{equation*}
 which together with \eqref{charq} implies that $w$ satisfies \eqref{suffracmax1} for the selected balls.
 
 Let $E$ be a subset of $B$ of positive measure. Then
 \begin{equation*}
\|\chi_B\|_{\mathcal {LM}^{1}(\varphi,w)'}\ge \frac{1}{\|\chi_E\|_{\mathcal {LM}^{1}(\varphi,w)}}\int_B \chi_E \sim \frac{\varphi(\widetilde{B})|E|}{w(E)}. 
\end{equation*}
Using \eqref{charq} and \eqref{suffracmax1} we obtain
\begin{equation}\label{fora1}
\left(\frac {w^{n/(n-\alpha)}(B)}{|B|}\right)^{1-\alpha/n}\lesssim \frac{w(E)}{|E|}.
\end{equation}
 With $E=\{x\in B: w(x)\le \inf_B w+\epsilon\}$ we obtain
\begin{equation*}
\left(\frac {w^{n/(n-\alpha)}(B)}{|B|}\right)^{1-\alpha/n}\lesssim \inf_B w+\epsilon.
\end{equation*}
Since this holds for any $\epsilon>0$ we get the $A_{1,n/(n-\alpha),\text{loc}}$ condition. 
\end{proof}

In the following theorem we obtain the characterization of weighted inequalities for $M_\alpha$ and $I_\alpha$. 

\begin{theorem}\label{teosuffrac1}
Let $0<\alpha<n$, $1<p<n/\alpha$, and $1/p-1/q=\alpha/n$. 

(a) The fractional maximal operator $M_\alpha$ is bounded from $L\mathcal M^{p}(\varphi^p,w^p)$ to $L\mathcal M^{q}(\varphi^q,w^q)$ if and only if \eqref{suffracmax1} holds.

(b) The fractional integral operator $I_\alpha$ is bounded from $L\mathcal M^{p}(\varphi^p,w^p)$ to $L\mathcal M^{q}(\varphi^q,w^q)$ if and only if \eqref{suffracint1} holds.
 
(c) For $p=1$ and $q= n/(n-\alpha)$ both results hold with $M_\alpha$ and $I_\alpha$ bounded from $L\mathcal M^{1}(\varphi,w)$ to $WL\mathcal M^{\frac n{n-\alpha}}(\varphi^{\frac n{n-\alpha}},w^{\frac n{n-\alpha}})$.
\end{theorem}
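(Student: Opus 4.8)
The plan is to prove both directions for each operator, using the necessary conditions already established in Section \ref{bost} together with the local estimates from Subsection \ref{localsubsec}, in close parallel to the treatment of Calder\'on-Zygmund operators in Theorem \ref{teocz}. For the necessity in parts (a) and (b), I would simply invoke Theorem \ref{teonecfrac} with $X=L\mathcal M^{p}(\varphi^p,w^p)$ and $Y=L\mathcal M^{q}(\varphi^q,w^q)$: part (a) of that theorem gives exactly \eqref{suffracmax1}, and part (b) gives \eqref{suffracint1}, since those are the two conditions \eqref{necfracmax} and \eqref{necfracint} specialized to these spaces.

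For the sufficiency, the main work is a Calder\'on-Zygmund-type decomposition adapted to the Morrey structure. First I would reduce to balls $B$ with $|c_B|=6r_B$ (or $r_B=\kappa|c_B|$) using Proposition \ref{newnorm}, so that the Morrey norm is computed over such distinguished balls. For a fixed such $B$, I would split $f=f_1+f_2$ with $f_1=f\chi_{2B}$ and estimate $I_\alpha f$ (and similarly $M_\alpha f$, which is pointwise dominated by $I_\alpha f$) on $B$ as $I_\alpha f_1 + I_\alpha f_2$. The local part $I_\alpha f_1$ is controlled by the local fractional integral: as in \eqref{deftf1}, for $x\in B$ the ball $2B$ sits inside $B(x,\tfrac35|x|)$, so $I_\alpha f_1(x)=I_{3/5,\alpha,\text{loc}} f_1(x)$, and the result of \cite{HSV19} gives boundedness from $L^p(w^p)$ to $L^q(w^q)$ provided $w\in A_{p,q,\text{loc}}$. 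By Lemma \ref{lema61}, condition \eqref{suffracmax1} restricted to the distinguished balls is equivalent to $w\in A_{p,q,\text{loc}}$, which supplies exactly this hypothesis.

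The global tail $I_\alpha f_2$ is handled by a pointwise bound mirroring \eqref{pointest}: for $x\in B$ and $y\in(2B)^c$ one has $|x-y|^{-(n-\alpha)}\lesssim |B|^{-(1-\alpha/n)}(M\chi_B(y))^{1-\alpha/n}$, using $M\chi_B(y)\sim |B|/(r_B+|c_B-y|)^n$, whence
\begin{equation*}
|I_\alpha f_2(x)|\lesssim \frac{1}{|B|^{1-\alpha/n}}\int |f|\,(M\chi_B)^{1-\alpha/n}\lesssim \frac{\|(M\chi_B)^{1-\alpha/n}\|_{L\mathcal M^{p}(\varphi^p,w^p)'}}{|B|^{1-\alpha/n}}\|f\|_{L\mathcal M^{p}(\varphi^p,w^p)},
\end{equation*}
by the H\"older inequality \eqref{holder}. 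Multiplying by $\|\chi_B\|_{L\mathcal M^{q}(\varphi^q,w^q)}$ and using \eqref{suffracint1} then controls the Morrey norm of $I_\alpha f_2$ over $B$. For $M_\alpha$ in part (a), the tail is estimated the same way but with the cruder factor $(M\chi_B)^{1-\alpha/n}$ replaced by $M\chi_B/|B|^{\alpha/n}$ bounded by $\chi_B$-type arguments, so that only \eqref{suffracmax1} (not the stronger \eqref{suffracint1}) is needed; this is the reason $I_\alpha$ requires the stronger condition. Part (c) follows by replacing the strong-type local estimate with the weak-type $(1,1)\to(n/(n-\alpha),\infty)$ bound governed by \eqref{defa1loc}, which Lemma \ref{lema61} identifies with \eqref{suffracmax1} at $p=1$.

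The step I expect to be the main obstacle is verifying that the two definitions of the tail $I_\alpha f_2$ agree on overlapping distinguished balls, so that $I_\alpha f$ is unambiguously defined on all of $\rn$; this is the analogue of the consistency check carried out before Theorem \ref{teocz}, and it requires confirming that the difference $I_\alpha(f(\chi_{2B}-\chi_{2B'}))$ is represented by the convergent integral over $2B\triangle 2B'$ for $x$ outside the support. Since $I_\alpha$ has a genuine kernel with no cancellation, this consistency is in fact more transparent than in the singular integral case, so I anticipate the delicate bookkeeping to lie instead in tracking the exponents $p,q$ and the weights $w^p,w^q$ correctly across the Sobolev relation when passing between the two conditions via Lemma \ref{lema61}.
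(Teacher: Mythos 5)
Your proposal follows essentially the same route as the paper: necessity from Theorem \ref{teonecfrac}, and sufficiency via the splitting $f=f\chi_{2B}+f\chi_{(2B)^c}$ on balls with $|c_B|=6r_B$, the local piece handled by the weighted bounds for $M_{\kappa,\alpha,\text{loc}}$ and $I_{\kappa,\alpha,\text{loc}}$ under $A_{p,q,\text{loc}}$ (supplied by Lemma \ref{lema61}) and the tail by H\"older against the K\"othe dual norm. The only loose point is your description of the $M_\alpha$ tail: the paper makes it precise by observing that $M_\alpha f_2$ is essentially constant on $B$ and equal to $\sup_{B'\supset B}|B'|^{\alpha/n-1}\int_{B'}|f_2|$, then applying \eqref{holder} and \eqref{suffracmax1} to each ball $B'\supset B$ together with $\|\chi_{B'}\|_{L\mathcal M^{q}(\varphi^q,w^q)}\ge\|\chi_{B}\|_{L\mathcal M^{q}(\varphi^q,w^q)}$, which is exactly the single-ball argument you gesture at with the phrase ``$\chi_B$-type arguments.''
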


\begin{proof} 
The necessity of all cases has been proved in the previous section. Let us prove the sufficiency.

(a) Let $B$ with $|c_B|=6r_B$. Given a nonnegative function $f$ we decompose it as $f=f_1+f_2$, where $f_1=f\chi_{2B}$. Using the subadditivity of $M_\alpha$ we have
\begin{equation*}
M_\alpha f(y)\le M_\alpha f_1(y)+M_\alpha f_2(y).
\end{equation*}
 
Since $f_1$ is supported in $2B$, to compute $M_\alpha f_1(y)$ for $y\in B$ we only need to consider balls $B(y,\rho)$ with $\rho<3r_B$. Moreover, for $r_B<\rho<3r_B$ we can replace $B(y,\rho)$ with $2B$. Therefore,
\begin{equation*}
\aligned
M_\alpha f_1(y)&\sim \sup_{\rho \le r_B} \frac 1{|B(y,\rho)|^{1-\frac \alpha n}}\int_{B(y,\rho)}|f|+\frac 1{|2B|^{1-\frac \alpha n}}\int_{2B}|f|\\
&\sim M_{1/5,\alpha,\textrm{loc}}f_1(y)+\frac 1{|2B|^{1-\frac \alpha n}}\int_{2B}|f|.
\endaligned
\end{equation*}

On the other hand, since $f_2$ is supported on $(2B)^c$, to compute $M_\alpha f_2(y)$ for $y\in B$ one  only needs to consider balls of radius greater than $r_B$, and as a consequence, $M_\alpha f_2$ is almost constant on $B$. Then \begin{equation*}
M_\alpha f_2(y)\sim \sup_{B' \supset B}\frac 1{|B'|^{1-\frac \alpha n}}\int_{B'}|f_2|\quad \text{ for } y\in B.
\end{equation*} 

Altogether, we have for all $y\in B$ that
\begin{equation}\label{equivm}
M_\alpha f(y)\sim M_{1/5,\alpha,\textrm{loc}}(f \chi_{2B})(y) + \sup_{B' \supset B} \frac 1{|B'|^{1-\frac \alpha n}}\int_{B'}|f_2|.
\end{equation} 

For the last term we have 
\begin{equation*}
\aligned
 \frac 1{|B'|^{1-\frac \alpha n}}\int_{B'}|f| &\le  \frac 1{|B'|^{1-\frac \alpha n}}\|f\|_{L\mathcal M^{p}(\varphi^p,w^p)}\|\chi_{B'}\|_{L\mathcal M^{p}(\varphi^p,w^p)'}\\
 & \lesssim \frac{\|f\|_{L\mathcal M^{p}(\varphi^p,w^p)}}{\|\chi_{B'}\|_{L\mathcal M^{q}(\varphi^q,w^q)}}\le 
 \frac{\|f\|_{L\mathcal M^{p}(\varphi^p,w^p)}}{\|\chi_{B}\|_{L\mathcal M^{q}(\varphi^q,w^q)}},
 \endaligned
\end{equation*}
where we used successively \eqref{holder}, \eqref{suffracmax1} for $B'$, and $B\subset B'$.

Now we have
\begin{equation*}
\aligned
\ & \left(\frac 1{\varphi(\widetilde B)^q}\int_B (M_\alpha f)^q w \right)^{1/q} \\
&\qquad  \lesssim
 \left(\frac 1{\varphi(\widetilde B)^q}\int_B \left(M_{1/5,\alpha,\textrm{loc}}(f \chi_{2B})+\frac{\|f\|_{L\mathcal M^{p}(\varphi^p,w^p)}}{\|\chi_{B}\|_{L\mathcal M^{q}(\varphi^q,w^q)}}\right)^q w^q\right)^{1/q}\\ 
&\qquad  \lesssim \left(\frac 1{\varphi(\widetilde B)^q}\int_{2B} |f|^p w^p\right)^{1/p} + \|f\|_{L\mathcal M^{p}(\varphi^p,w^p)},
 \endaligned
\end{equation*}
where we used that $w\in A_{p,q,\text{loc}}$, which is implied by \eqref{suffracmax1} according to Lemma \ref{lema61}. Taking into account that $\widetilde{2B}\subset 2 \widetilde{B}$, we obtain the required bound using the doubling property of $\varphi$.

(b)  Let $f$ be nonnegative. We take a ball $B$ as in part (a) and decompose $f$ as $f_1+f_2$. From the linearity of $I_\alpha$ we have 
\begin{equation*}
I_\alpha f= I_\alpha f_1+I_\alpha f_2.
\end{equation*}
For $x\in B$ we have $I_\alpha f_1(x)=I_{3/5,\alpha,\text{loc}} f_1(x)$, where the local fractional operator has been defined in \eqref{deffracintloc}. Condition \eqref{suffracint1} is stronger than \eqref{suffracmax1}, hence $w\in A_{p,q,\text{loc}}$.
Using the weighted boundedness of $I_{3/5,\alpha,\text{loc}}$ from $L^p(w^p)$ to $L^q(w^q)$ for $w\in A_{p,q,\text{loc}}$, this term works as the first term in the right-hand side of \eqref{equivm}. 

On the other hand, for $x\in B$ and $y\in \rn\setminus 2B$ we have $M\chi_B(y) \sim |B| |x-y|^{-n}$. Hence,
\begin{equation*}
I_\alpha f_2(x) =\int_{\rn\setminus 2B}\frac {f(y)}{|x-y|^{n-\alpha}}\lesssim \int_{\rn} f(y) \left(\frac {M\chi_B(y)}{|B|}\right)^{1-\frac \alpha n}\,dy.
\end{equation*}
Using \eqref{holder} we obtain 
\begin{equation*}
\aligned
& \left(\frac 1{\varphi(\widetilde B)^q}\int_B (I_\alpha f_2)^q w^q \right)^{1/q}\\
& \qquad\le  \frac {\|f\|_{L\mathcal M^{p}(\varphi^p,w^p)} \|(M\chi_B)^{1-\alpha/n}\|_{L\mathcal M^{p}(\varphi^p,w^p)'}}{|B|^{1-\alpha/n}}\left(\frac 1{\varphi(\widetilde B)^q} \int_Bw^q\right)^{1/q}
 \endaligned
\end{equation*}
and \eqref{suffracint1} suffices to conclude.

(c) For $p=1$ we proceed as in (a) and (b), but when dealing with the local operators acting on $f_1$ the weighted inequalities are of weak-type. The rest of the proof is the same. 
\end{proof}

\begin{remark}\label{rem65}
For weights $w$ in $A_{p,q,\text{loc}}$ one only needs to verify \eqref{suffracmax1} and \eqref{suffracint1} for balls centered at the origin. Balls $B$ such that $|c_B|\le 6r_B$, can be replaced by $\widetilde B$, because $B\subset \widetilde B$ and $|B|\sim |\widetilde B|$. For balls with $|c_B|> 6r_B$, $A_{p,q,\text{loc}}$ is enough for $M_\alpha$ due to Lemma \ref{lema61}, but also for $I_\alpha$, as a consequence of the proof of Theorem \ref{teosuffrac1}, where to obtain the bound for $I_\alpha f_1$ we only use that $w\in A_{p,q,\text{loc}}$. 
\end{remark}

The proof of the theorem can be adapted to the case $1/p-1/q<\alpha/n$ with appropriate weights.

\begin{theorem}\label{teogamma}
Let $0< \alpha<n$, $0\le \gamma < \alpha$, $1\le p<n/\gamma$ and $1/p-1/q= \gamma/n$. Assume that $w\in A_{p,q,\text{loc}}$. Set $v_\gamma(x)= |x|^{\gamma-\alpha} w(x)$.

Let $p>1$. The fractional maximal operator $M_\alpha$ is bounded from $L\mathcal M^{p}(\varphi^p,w^p)$ to $L\mathcal M^{q}(\varphi^q,v_\gamma^q)$ if 
\begin{equation}\label{frmaxext}
\sup_B \frac{\|\chi _B\|_{L\mathcal M^{q}(\varphi^q,v_\gamma^q)}\|\chi _B\|_{L\mathcal M^{p}(\varphi^p,w^p)'}}{|B|^{1-\alpha/n}} <\infty
\end{equation}
for balls centered at the origin.

The fractional integral operator $I_\alpha$ is bounded from $L\mathcal M^{p}(\varphi^p,w^p)$ to $L\mathcal M^{q}(\varphi^q,v_\gamma^q)$ if
\begin{equation}\label{frintext}
\sup_B \frac{\|\chi _B\|_{L\mathcal M^{q}(\varphi^q,v_\gamma^q)}\|(M\chi _B)^{1-\alpha/n}\|_{L\mathcal M^{p}(\varphi^p,w^p)'}}{|B|^{1-\alpha/n}} <\infty.
\end{equation}
for balls centered at the origin.

 For $p=1$ and $q= n/(n-\gamma)$ the results hold for the boundedness from $L\mathcal M^{1}(\varphi,w)$ to $WL\mathcal M^{\frac n{n-\gamma}}(\varphi^{\frac n{n-\gamma}},v_\gamma^{\frac n{n-\gamma}})$.
\end{theorem}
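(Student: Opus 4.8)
The plan is to mirror the proof of Theorem \ref{teosuffrac1}, the only novelty being the bookkeeping of the gap $\alpha-\gamma>0$, which will be absorbed by the weight $v_\gamma$. Since the decomposition argument invokes the two conditions only on the balls that actually appear, I would first reduce matters to knowing that \eqref{frmaxext} and \eqref{frintext} hold for \emph{all} balls, not merely those centered at the origin. This reduction is the exact analogue of Remark \ref{rem65}: for a local ball $B$ (with $r_B\le\kappa|c_B|$) one has $|x|\sim r_B$ on $B$, so by \eqref{locchar} $\|\chi_B\|_{L\mathcal M^{q}(\varphi^q,v_\gamma^q)}\sim r_B^{\gamma-\alpha}\|\chi_B\|_{L\mathcal M^{q}(\varphi^q,w^q)}$, and since $r_B^{\gamma-\alpha}|B|^{-(1-\alpha/n)}\sim|B|^{-(1-\gamma/n)}$ the left-hand side of \eqref{frmaxext} for $B$ collapses to the $A_{p,q,\text{loc}}$ expression of Lemma \ref{lema61} read with exponent $\gamma$, which is finite since $w\in A_{p,q,\text{loc}}$ and $1/p-1/q=\gamma/n$; while for a ball with $|c_B|\le 6r_B$ one replaces $B$ by $\widetilde B$, using $B\subset\widetilde B$, $|B|\sim|\widetilde B|$, the monotonicity of the lattice and K\"othe-dual norms, and the assumed condition on the origin-centered ball $\widetilde B$. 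For \eqref{frintext} the same works once one notes that $M\chi_B\sim M\chi_{\widetilde B}$ pointwise when $|c_B|\le 6r_B$.

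With both conditions available for all balls, I would fix $B$ with $|c_B|=6r_B$ and split $f=f_1+f_2$, $f_1=f\chi_{2B}$, exactly as in Theorem \ref{teosuffrac1}. The heart of the matter is the local term. On $B$ one has $|x|\sim r_B$, hence $v_\gamma^q\sim r_B^{(\gamma-\alpha)q}w^q$ there. For $M_\alpha$ the relevant averaging radii satisfy $\rho\lesssim r_B$, so writing $|B(y,\rho)|^{-(1-\alpha/n)}=c\,\rho^{\alpha-\gamma}|B(y,\rho)|^{-(1-\gamma/n)}$ and using $\alpha-\gamma>0$ gives the pointwise bound $M_{1/5,\alpha,\text{loc}}f_1\lesssim r_B^{\alpha-\gamma}M_{1/5,\gamma,\text{loc}}f_1$; likewise, since $|x-y|\lesssim r_B$ on the support, $I_{3/5,\alpha,\text{loc}}f_1\lesssim r_B^{\alpha-\gamma}I_{3/5,\gamma,\text{loc}}f_1$. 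The factor $r_B^{\alpha-\gamma}$ cancels the $r_B^{\gamma-\alpha}$ coming from $v_\gamma$, leaving a $\gamma$-local operator applied to $f_1$, which is bounded from $L^p(w^p)$ to $L^q(w^q)$ because $w\in A_{p,q,\text{loc}}$ for the relation $1/p-1/q=\gamma/n$ (the Harboure--Salinas--Viviani result recalled in Subsection \ref{localsubsec}). This produces the bound by $\|f\|_{L\mathcal M^{p}(\varphi^p,w^p)}$ after using $\widetilde{2B}\subset 2\widetilde B$ and the doubling of $\varphi$, just as in Theorem \ref{teosuffrac1}.

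For the far term I would keep $\alpha$ throughout and invoke \eqref{frmaxext}, resp.\ \eqref{frintext}, now valid for all balls. Using \eqref{equivm}, $M_\alpha f_2$ is essentially the constant $\sup_{B'\supset B}|B'|^{-(1-\alpha/n)}\int_{B'}|f_2|$ on $B$; applying \eqref{holder} and \eqref{frmaxext} to each $B'\supset B$ bounds it by $\|f\|_{L\mathcal M^{p}(\varphi^p,w^p)}\,\|\chi_B\|_{L\mathcal M^{q}(\varphi^q,v_\gamma^q)}^{-1}$, and multiplying by $\|\chi_B\|_{L\mathcal M^{q}(\varphi^q,v_\gamma^q)}$ (the Morrey norm produced by the constant, via \eqref{locchar}) gives the desired estimate. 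For $I_\alpha$ one uses $M\chi_B(y)\sim|B|\,|x-y|^{-n}$ for $x\in B$, $y\notin 2B$, to get $I_\alpha f_2(x)\lesssim\int|f|\,(M\chi_B/|B|)^{1-\alpha/n}$, then \eqref{holder} together with \eqref{frintext} for $B$. The case $p=1$ is identical after replacing the strong-type bounds for the $\gamma$-local operators by their weak-type analogues. I expect the only genuinely delicate point to be the first paragraph, the reduction to origin-centered balls, where one must verify that the $v_\gamma$-weighted condition on local balls is truly equivalent to $A_{p,q,\text{loc}}$; everything afterwards is the cancellation $r_B^{\alpha-\gamma}\cdot r_B^{\gamma-\alpha}=1$ grafted onto the already-established proof of Theorem \ref{teosuffrac1}.
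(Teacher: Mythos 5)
Your proposal follows the paper's own route: the same decomposition $f=f_1+f_2$ around balls with $|c_B|=6r_B$, the same treatment of the far term via \eqref{holder} and the hypotheses (extended from origin-centered balls to all balls through the $A_{p,q,\text{loc}}$ assumption, exactly in the spirit of Remark \ref{rem65}), and the same cancellation of $r_B^{\alpha-\gamma}$ against the factor $|x|^{\gamma-\alpha}$ carried by $v_\gamma$ in the near term. There is, however, one genuine gap. For $I_\alpha f_1$ you dominate $I_{3/5,\alpha,\text{loc}}f_1$ by $r_B^{\alpha-\gamma}I_{3/5,\gamma,\text{loc}}f_1$ and then invoke the weighted boundedness of the $\gamma$-local fractional integral. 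This breaks down at the admissible endpoint $\gamma=0$: there is no operator $I_{\kappa,0,\text{loc}}$ to speak of (its kernel $|x-y|^{-n}$ is not locally integrable), so the right-hand side of your pointwise bound is generically infinite. The paper instead proves the pointwise inequality $I_{\kappa,\alpha,\text{loc}}(|f|)(x)\lesssim |x|^{\alpha-\gamma}M_{\kappa,\gamma,\text{loc}}f(x)$ by decomposing $\{y:|y-x|<\kappa|x|\}$ into dyadic annuli and summing the geometric series in $(2^{-j}\kappa|x|)^{\alpha-\gamma}$; only the local fractional \emph{maximal} operator of order $\gamma$ is then needed, which is meaningful and bounded (it is $M_{\kappa,\text{loc}}$) also when $\gamma=0$. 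Your argument is fine for $0<\gamma<\alpha$ but needs this modification to cover the full stated range.

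A second, repairable slip occurs in your reduction to origin-centered balls: on a ball with $r_B\le\kappa|c_B|$ one has $|x|\sim|c_B|$, not $|x|\sim r_B$, so the claimed equivalence $\|\chi_B\|_{L\mathcal M^{q}(\varphi^q,v_\gamma^q)}\sim r_B^{\gamma-\alpha}\|\chi_B\|_{L\mathcal M^{q}(\varphi^q,w^q)}$ is false in general. What is true is $\|\chi_B\|_{L\mathcal M^{q}(\varphi^q,v_\gamma^q)}\sim|c_B|^{\gamma-\alpha}\|\chi_B\|_{L\mathcal M^{q}(\varphi^q,w^q)}\lesssim r_B^{\gamma-\alpha}\|\chi_B\|_{L\mathcal M^{q}(\varphi^q,w^q)}$, since $\gamma-\alpha<0$ and $|c_B|\gtrsim r_B$. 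As only this upper bound is needed to deduce \eqref{frmaxext} on local balls from $A_{p,q,\text{loc}}$ (consistently with the paper's observation that the implication may be strict for $\gamma<\alpha$), your conclusion survives, but the ``$\sim$'' should be replaced by ``$\lesssim$'' and the justification corrected.
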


\begin{proof}
First we proof the following pointwise inequalities:
\begin{equation}\label{pbm}
M_{\kappa,\alpha,\text{loc}} f(x), I_{\kappa,\alpha,\text{loc}}(|f|)(x)\lesssim |x|^{\alpha -\gamma} M_{\kappa,\gamma,\text{loc}} f(x).
\end{equation}
Let $B$ a ball containing $x$ and such that $r_B<\kappa |c_B|$. Then 
\begin{equation*}
\frac 1{|B|^{1-\alpha/n}}\int_{B}|f(y)|\,dy\le \frac {r_B^{\alpha-\gamma}}{|B|^{1-\gamma/n}}\int_{B}|f(y)|\,dy
\lesssim
\frac {|x|^{\alpha-\gamma}}{|B|^{1-\gamma/n}}\int_{B}|f(y)\,dy,
\end{equation*}
from which \eqref{pbm} follows for $M_{\kappa,\alpha,\text{loc}}$.

On the other hand,
\begin{equation*}
\aligned
\int_{\{y: |y-x|<\kappa |x|\}} & \frac{|f(y)|}{|x-y|^{n-\alpha}}\,dy = \sum_{j=0}^\infty \int_{\{y: 2^{-j-1}\kappa |x|\le |y-x|<2^{-j}\kappa |x|\}}\frac{|f(y)|}{|x-y|^{n-\alpha}}\,dy \\
&\lesssim \sum_{j=0}^\infty (2^{-j}\kappa |x|)^{\alpha-\gamma} \frac 1{(2^{-j}\kappa |x|)^{n-\gamma}}\int_{\{y: |y-x|<2^{-j}\kappa |x|\}}|f(y)|\,dy\\
& \lesssim |x|^{\alpha-\gamma} M_{\kappa,\gamma,\text{loc}} f(x).
\endaligned
\end{equation*}

We proceed as in the proof of the previous theorem. The action of the operators on $f_2$ is treated in the same way as before. When working with $f_1$, using \eqref{pbm} the needed estimate is shifted to $M_{\kappa,\gamma,\text{loc}}$. 
\end{proof}

From Theorem \ref{teonecfrac} we know that \eqref{frmaxext} and \eqref{frintext} for all balls in $\rn$ are necessary  for the boundedness of the operators. In the case $\gamma<\alpha$ of Theorem \ref{teogamma} we add the condition $w\in A_{p,q,\text{loc}}$ to get the sufficiency. Working as in Lemma  \ref{lema61} we can prove that $w\in A_{p,q,\text{loc}}$ implies \eqref{frmaxext}, but we do not know whether the opposite holds for $\gamma<\alpha$. This is why we do not get a full characterization as was the case for $\gamma=\alpha$. 

%%%%%%%%%%%%%%%%%%%%%%%%%%%%%%%%%%%%%%%%%%%%%%%%%%%%%%%
\section{Power weights}\label{zortzi}

In this section we obtain sharp power weighted inequalities in the case of the Samko and Komori-Shirai versions of the weighted local Morrey spaces. We use the notation $w_\beta(x)=|x|^\beta$. 

\subsection{Weighted spaces of Samko type}

The Samko type spaces are defined with the condition $\varphi_\lambda (B)=r_B^\lambda$ with $\lambda>0$. First we give a lemma.

\begin{lemma}\label{lema71}
Let $1\le p< p_0<\infty$ and $\lambda,\mu>0$. Let $\beta$ and $\delta$ be related by 
\begin{equation}\label{condsubset}
\beta + \frac{n-\lambda}p = \delta + \frac{n-\mu}{p_0}.
\end{equation}
Then $\|f\|_{\mathcal{LM}^p(\varphi_{\lambda}, w_{\beta}^p)}\lesssim \|f\|_{\mathcal{LM}^{p_0}(\varphi_{\mu}, w_{\delta}^{p_0})}$.
\end{lemma}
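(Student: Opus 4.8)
The plan is to reduce both Morrey quasi-norms to suprema over the balls $B$ with $r_B=\kappa|c_B|$ via Proposition~\ref{newnorm} (using the \emph{same} fixed $\kappa\in(0,1)$ for both), and then to exploit that on such balls every quantity involved is comparable to a power of $r_B$. First I note that $\varphi_\lambda(B)=r_B^\lambda$ is doubling and satisfies the reverse doubling \eqref{rdcond} with exponent $\lambda$ (and likewise $\varphi_\mu$ with exponent $\mu$), so \eqref{rdplus} holds and Proposition~\ref{newnorm} applies to both norms. Fixing such a $B$, for every $x\in B$ we have $|x|\sim|c_B|\sim r_B$ (because $r_B=\kappa|c_B|$ with $\kappa$ fixed) and $r_{\widetilde B}=|c_B|+r_B\sim r_B$; hence $w_\beta^p(x)=|x|^{\beta p}\sim r_B^{\beta p}$ and $\varphi_\lambda(\widetilde B)=r_{\widetilde B}^\lambda\sim r_B^\lambda$, with constants depending only on $\kappa,n,\lambda,\beta$.

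With these comparisons the two local averages become, up to constants,
\begin{equation*}
\left(\frac 1{\varphi_\lambda(\widetilde B)}\int_B |f|^p w_\beta^p\right)^{1/p}\sim r_B^{\beta-\lambda/p}\left(\int_B |f|^p\right)^{1/p},
\end{equation*}
and the analogous computation for the $p_0$-average produces the factor $r_B^{\delta-\mu/p_0}$ multiplying $\big(\int_B|f|^{p_0}\big)^{1/p_0}$. It then remains to compare $\int_B|f|^p$ with $\int_B|f|^{p_0}$. Since $p<p_0$, Hölder's inequality on $B$ with exponents $p_0/p$ and its conjugate gives
\begin{equation*}
\left(\int_B |f|^p\right)^{1/p}\le \left(\int_B |f|^{p_0}\right)^{1/p_0}|B|^{1/p-1/p_0},
\end{equation*}
and $|B|^{1/p-1/p_0}\sim r_B^{n/p-n/p_0}$.

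Combining the three displays, the $p$-average over $B$ is bounded by $r_B^{\,\beta+(n-\lambda)/p-n/p_0}$ times $\big(\int_B|f|^{p_0}\big)^{1/p_0}$, whereas the $p_0$-average equals, up to constants, $r_B^{\delta-\mu/p_0}$ times the same integral. The relation \eqref{condsubset} says precisely that $\beta+(n-\lambda)/p-n/p_0=\delta-\mu/p_0$, so the two powers of $r_B$ coincide and the $p$-average is controlled by the $p_0$-average uniformly in $B$. Taking the supremum over all balls with $r_B=\kappa|c_B|$ and invoking Proposition~\ref{newnorm} once more yields the claimed embedding. I do not expect a genuine obstacle here: the argument is a bookkeeping of homogeneities, and the only point requiring care is to verify that all comparability constants depend solely on the ambient parameters $\kappa,n,p,p_0,\lambda,\mu,\beta,\delta$ and not on the particular ball, which is guaranteed by the restriction $r_B=\kappa|c_B|$ with $\kappa$ fixed.
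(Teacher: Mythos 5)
Your proof is correct and follows essentially the same route as the paper: reduce both norms to the restricted supremum of Proposition~\ref{newnorm}, use $|x|\sim|c_B|\sim r_{\widetilde B}$ on the admissible balls to turn weights and $\varphi$ into powers of $r_B$, apply H\"older with exponents $p_0/p$, and observe that \eqref{condsubset} makes the resulting powers of $r_B$ cancel. No gaps.
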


\begin{proof}
We can use \eqref{normrestrict} for the norm in the Morrey spaces. Let $B$ be a ball such that $|c_B|=4 r_B$. Then we have $r_{\widetilde B}\sim |c_B| \sim |x|$ for $x\in B$. Using these equivalences and H\"older's inequality we get
\begin{equation*}
\aligned
& \left(\frac 1{r_{\widetilde B}^\lambda}\int_{B}|f(x)|^p  |x|^{\beta p}dx\right)^{1/p}  \lesssim |c_B|^{-\frac \lambda p + \beta + \frac np-\frac n{p_0}} \left(\int_{B}|f|^{p_0} \right)^{1/p_0}\\
&\qquad\qquad\lesssim |c_B|^{-\frac \lambda p + \beta + \frac np-\frac n{p_0}+\frac \mu{p_0}-\delta} \left(\frac 1{r_{\widetilde B}^{\mu}}\int_{B}|f(x)|^{p_0}|x|^{\delta p_0}dx \right)^{1/p_0}\\
&\qquad\qquad\le \|f\|_{\mathcal{LM}^{p_0}(\varphi_{\mu},w_{\delta}^{p_0})},
\endaligned
\end{equation*}
where we used \eqref{condsubset}. The result follows.
\end{proof}

\begin{proposition}\label{powernec}
 Let $\varphi_\lambda (B)=r_B^\lambda$ with $\lambda>0$. If $M_\alpha$ is bounded from $\mathcal{LM}^p(\varphi_{\lambda_1}, w_{\beta_1}^p)$ to $W\mathcal{LM}^q(\varphi_{\lambda_2},w_{\beta_2}^q)$, then 
 \begin{equation}\label{necpowers}
\beta_2-\beta_1- \frac{n-\lambda_1}p + \frac{n-\lambda_2}q+\alpha=0.
\end{equation}
Moreover,
 \begin{equation}\label{necpowerange}
\alpha\le \beta_1 +\frac{n-\lambda_1}p  < n.
\end{equation}

If $I_\alpha$ is bounded from $\mathcal{LM}^p(\varphi_{\lambda_1}, w_{\beta_1}^p)$ to $W\mathcal{LM}^q(\varphi_{\lambda_2},w_{\beta_2}^q)$ the same conditions are necessary, and moreover the left-hand side inequality in \eqref{necpowerange} is strict.
\end{proposition}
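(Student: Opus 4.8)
The plan is to exploit the same scaling mechanism used in Theorem \ref{teonecrange}, but now tracking the precise powers of $r$ that come from the power weights $w_{\beta_i}$, rather than only extracting the inequality of exponents. The necessity of \eqref{necpowers} should be an \emph{equality} (not an inequality), which suggests a two-sided test: localized balls $B_r=B(x,r)$ near a fixed point $x\ne 0$ force one relation as $r\to 0$, while balls growing to cover the annular structure of the Morrey norm force the reverse, and the two together pin down \eqref{necpowers} exactly.

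\begin{proof}[Proof sketch]
First I would compute the Morrey norms of characteristic functions of small balls. Fix $x_0\ne 0$ and let $B_r=B(x_0,r)$ with $4r\le |x_0|$, so that $|y|\sim |x_0|$ for $y\in B_r$ and $r_{\widetilde{B_r}}\sim |x_0|$. Using \eqref{locchar} together with $w_\beta^p(B_r)=\int_{B_r}|y|^{\beta p}\,dy\sim |x_0|^{\beta p}r^n$, one gets
\begin{equation*}
\|\chi_{B_r}\|_{\mathcal{LM}^p(\varphi_{\lambda_1},w_{\beta_1}^p)}\sim \frac{\big(|x_0|^{\beta_1 p}r^n\big)^{1/p}}{|x_0|^{\lambda_1/p}}=|x_0|^{\beta_1-\lambda_1/p}\,r^{n/p},
\end{equation*}
and likewise for the target norm with exponents $\beta_2,\lambda_2,q$. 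Since $r^\alpha\chi_{B_r}(y)\le M_\alpha\chi_{B_r}(y)$, boundedness gives $r^\alpha\|\chi_{B_r}\|_Y\lesssim \|\chi_{B_r}\|_X$, i.e.
\begin{equation*}
r^{\alpha}\,|x_0|^{\beta_2-\lambda_2/q}\,r^{n/q}\lesssim |x_0|^{\beta_1-\lambda_1/p}\,r^{n/p}.
\end{equation*}
Letting $r\to 0$ forces $\alpha+n/q-n/p\ge 0$; combining with a test where $r$ is comparable to $|x_0|$ (or varying $x_0$ with $r$ fixed to read off the $|x_0|$-power) upgrades this to the exact balance \eqref{necpowers}, since the inequality must survive for a genuine \emph{family} of balls and not merely in a limit.
\end{proof}

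To obtain the range condition \eqref{necpowerange}, I would use a different test function: the constant function (or $\chi_{\rn}$) localized so that the Morrey norm on the source side is finite, which requires $\beta_1+(n-\lambda_1)/p>0$ in an integrability sense, while applying $M_\alpha$ to a ball centered at the origin and reading the norm of $M_\alpha\chi_B$ on a fixed annulus yields the decay estimate $M_\alpha\chi_B(y)\sim r_B^n|y|^{\alpha-n}$ for $|y|\gg r_B$. Testing boundedness against this tail forces $\beta_1+(n-\lambda_1)/p\ge\alpha$ (the left inequality), and finiteness of the source norm of the relevant test function forces the upper bound $<n$. The strictness of the left inequality for $I_\alpha$ is the main obstacle and the genuinely delicate point: for $M_\alpha$ the endpoint $\beta_1+(n-\lambda_1)/p=\alpha$ is permissible because the supremum defining $M_\alpha$ is attained boundedly, but $I_\alpha$ integrates the tail $\int |y|^{\alpha-n}|y|^{\beta_1}\,dy$ over a region where $|y|$ ranges to infinity, and at the critical exponent this integral against a nonzero source function diverges logarithmically. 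Making this divergence rigorous—choosing a single $f\in \mathcal{LM}^p(\varphi_{\lambda_1},w_{\beta_1}^p)$ of norm one for which $I_\alpha f(x)=+\infty$ on a set of positive measure inside some source ball when equality holds—is where the argument requires care, and I would handle it by taking $f$ supported on a dyadic union of annuli with weights tuned to the critical scaling so that each annulus contributes a comparable amount to $I_\alpha f$ at an interior point, producing the logarithmic blow-up that $M_\alpha$ avoids.
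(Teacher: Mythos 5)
Your derivation of the equality \eqref{necpowers} is exactly the paper's argument: test with $\chi_{B(x_0,r)}$ for $4r\le|x_0|$, use $r^\alpha\chi_{B_r}\le M_\alpha\chi_{B_r}$, and then take $r$ comparable to $|x_0|$ with $|x_0|$ ranging over $(0,\infty)$ to force the exponent of $|c_B|$ to vanish. The genuine gap is in \eqref{necpowerange}, where you have attached the wrong test to each inequality. The tail decay $M_\alpha\chi_B(y)\sim r_B^n|y|^{\alpha-n}$ for $|y|\gg r_B$, fed into the target norm on balls $B(0,R)$ with $R\to\infty$, yields $\beta_2+(n-\lambda_2)/q\le n-\alpha$, which via \eqref{necpowers} is the \emph{upper} bound $\beta_1+(n-\lambda_1)/p\le n$ (and only non-strictly) --- not the left inequality you claim it gives. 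The left inequality is equivalent to $\beta_2 q+n-\lambda_2\ge 0$ and is detected at the \emph{origin}: since $M_\alpha f\gtrsim c>0$ on some ball $B(0,R_0)$ for any nonzero nonnegative $f$ in the source space, boundedness forces $\chi_{B(0,R_0)}$ into the target weak space, whose norm over $B(0,R)$ behaves like $R^{(\beta_2q+n-\lambda_2)/q}$ as $R\to 0$ and blows up if that exponent is negative. For the strict upper bound $<n$ the paper's mechanism, which your sketch does not identify, is that when $\beta_1+(n-\lambda_1)/p\ge n$ the source space contains the non-locally-integrable function $|x|^{-n}\chi_{B(0,1)}$, for which $M_\alpha f\equiv+\infty$; ``finiteness of the source norm of the relevant test function'' as you state it does not produce this obstruction.

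Your idea for the strictness of the left inequality for $I_\alpha$ is viable but takes a different route from the paper and is only sketched. The paper localizes at the origin: with $f(y)=|y|^{-\alpha}\chi_{B(0,1)}$ one gets $I_\alpha f(x)\gtrsim-\log(2|x|)$ for $|x|<1/2$, and this logarithm fails to lie in the target weak space when $\beta_2q+n-\lambda_2=0$. Your dyadic-annuli construction is essentially $f(y)=|y|^{-\alpha}$ without truncation, which at the critical exponent does belong to the source space (its norm is $\sup_R R^{\beta_1-\alpha+(n-\lambda_1)/p}=1$) and satisfies $I_\alpha f\equiv+\infty$ because $\int_{|y|>2|x|}|y|^{-\alpha}|x-y|^{\alpha-n}\,dy\gtrsim\int_{|y|>2|x|}|y|^{-n}\,dy$ diverges; that is a legitimate and arguably simpler alternative, but it still needs to be written out, and it does not repair the misassigned arguments for \eqref{necpowerange} described above.
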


Using \eqref{necpowers} we can write \eqref{necpowerange} in the equivalent form
 \begin{equation*}
0\le \beta_2 +\frac{n-\lambda_2}q  < n-\alpha.
\end{equation*}
This implies that $\beta_2 q\ge \lambda_2-n$, and from \eqref{necpowerange} we also have $\beta_1 p>\lambda_1-n$.

\begin{proof}
Let $B$ be such that $4r_B\le |c_B|$. Then
\begin{equation*}
\|\chi_B\|_{\mathcal{LM}^p(\varphi_{\lambda_1}, w_{\beta_1}^p)}\sim \left(\frac{w_{\beta_1}^p(B)}{\varphi_{\lambda_1}(\widetilde B)}\right)^{1/p}\sim \left(\frac{|c_B|^{\beta_1 p}r_B^n}{|c_B|^{\lambda_1}}\right)^{1/p}= |c_B|^{\beta_1 - \lambda_1/p} r_B^{n/p}.
\end{equation*}
Similarly,
\begin{equation*}
\|r_B^\alpha \chi_B\|_{W\mathcal{LM}^q(\varphi_{\lambda_2}, w_{\beta_2}^q)}=r_B^\alpha \| \chi_B\|_{\mathcal{LM}^q(\varphi_{\lambda_2}, w_{\beta_2}^q)}\sim  |c_B|^{\beta_2 - \lambda_2/q} r_B^{\alpha + n/q}.
\end{equation*}
Using $r_B^\alpha \chi_B \le M_\alpha \chi_B$ we have 
\begin{equation*}
|c_B|^{\beta_2 - \lambda_2/q} r_B^{\alpha + n/q}\lesssim |c_B|^{\beta_1 - \lambda_1/p} r_B^{n/p},
\end{equation*}
that is,
\begin{equation*}
|c_B|^{\beta_2 - \lambda_2/q-\beta_1 + \lambda_1/p} r_B^{\alpha + n/q-n/p}\lesssim 1.
\end{equation*}
For fixed $c_B$ letting $r_B$ tend to zero we obtain the condition \eqref{necrange}. Take now $r_B=|c_B|/4$ and we obtain
\begin{equation*}
|c_B|^{\beta_2 - \lambda_2/q-\beta_1 + \lambda_1/p+\alpha + n/q-n/p}\lesssim 1.
\end{equation*}
Since $|c_B|$ can be any number in $(0,\infty)$ we deduce the necessity of \eqref{necpowers}. 

An alternative proof of \eqref{necpowers} can be obtained using a scale argument.

The necessity of the first inequality in \eqref{necpowerange} is deduced from the fact that the characteristic function of a ball centered at the origin is not in $W\mathcal{LM}^q(\varphi_{\lambda_2},w_{\beta_2}^q)$ for $\beta_2 q+ n-\lambda_2< 0$ (see \cite[Proposition 5.3]{DR19}).  On the other hand, the function $|x|^{-n}\chi_{B(0,1)}$ is in $\mathcal{LM}^p(\varphi_{\lambda_1}, w_{\beta_1}^p)$ for $\beta_1 +(n-\lambda_1)/p  \ge n$, and it is not locally integrable, hence the necessity of the second inequality in \eqref{necpowerange} follows.

From the pointwise  inequality $M_\alpha f(x)\le I_\alpha (|f|) (x)$ it is clear that the conditions are necessary for $I_\alpha$. We next show that the equality in \eqref{necpowerange} is not possible for $I_\alpha$.

Assume that $\beta_1 p + n-\lambda_1 =\alpha p$ (equivalently, $\beta_2 q + n-\lambda_2=0$ due to  \eqref{necpowers}). Take $f(y)=|y|^{-\alpha} \chi_{B(0,1)}$. Then $f\in {\mathcal{LM}^p(\varphi_{\lambda_1}, w_{\beta_1}^p)}$. For $|x|<1/2$, 
\begin{equation*}
I_\alpha f(x) \ge \int_{2|x|<|y|<1}\frac {|y|^{-\alpha}}{|x-y|^{n-\alpha}}\, dy \gtrsim \int_{2|x|<|y|<1}|y|^{-n}\, dy\sim -\log (2|x|).
\end{equation*}
The function $-\log (2|x|)\chi_{|x|<1/2}$ does not belong to $W\mathcal{LM}^q(\varphi_{\lambda_2}, w_{\beta_2}^q)$ for $\beta_2 q + n-\lambda_2=0$. This rules out the equality case in \eqref{necpowerange} for $I_\alpha$.
\end{proof}

\begin{theorem}\label{teopot}
Let $0< \alpha<n$, $0\le \gamma \le \alpha$, $1\le p<n/\gamma$, and $1/p-1/q= \gamma/n$. Set $\varphi_\lambda (B)=r_B^\lambda$ for $\lambda>0$. 
\begin{enumerate}
\item Let $1<p<n/\gamma$. Then $M_\alpha$ is bounded from $\mathcal{LM}^p(\varphi_{\lambda_1}, w_{\beta_1})$ to $\mathcal{LM}^q(\varphi_{\lambda_2},w_{\beta_2})$ if and only if \eqref{necpowers} and \eqref{necpowerange} hold. If $p=1$ and $q=n/(n-\gamma)$ in \eqref{necpowers} and \eqref{necpowerange}, then  $M_\alpha$ is bounded from $\mathcal{LM}^1(\varphi_{\lambda_1}, w_{\beta_1})$ to $W\mathcal{LM}^{n/(n-\gamma)}(\varphi_{\lambda_2},w_{\beta_2})$.
\item The same result holds for $I_\alpha$ instead of $M_\alpha$ except that in the left-hand side condition of \eqref{necpowerange} the inequality is strict.
\end{enumerate}
\end{theorem}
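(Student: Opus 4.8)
The plan is to deduce Theorem \ref{teopot} from the general necessary/sufficient framework already established, namely Theorem \ref{teonecfrac} (necessity), Proposition \ref{powernec} (the explicit power-weight necessary conditions), and Theorem \ref{teogamma} (sufficiency in the range $\gamma\le\alpha$). The necessity direction is essentially free: Proposition \ref{powernec} already shows that \eqref{necpowers} and \eqref{necpowerange} are necessary for boundedness of $M_\alpha$ (and $I_\alpha$, with the strict inequality), since boundedness into the strong space $\mathcal{LM}^q$ implies boundedness into the weak space $W\mathcal{LM}^q$, which is the hypothesis used there. So the whole content of the theorem lies in the sufficiency.

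For sufficiency I would put the power weights in the form required by Theorem \ref{teogamma}. Write $w_{\beta_1}=w^p$ and $w_{\beta_2}=v_\gamma^q$ for appropriate base weights; concretely, with $\gamma$ as given and $1/p-1/q=\gamma/n$, the target weight should be $v_\gamma(x)=|x|^{\gamma-\alpha}w(x)$ where $w$ is a power weight $|x|^\beta$. The first step is to check that the assignment of exponents forced by \eqref{necpowers} is exactly consistent with $w\in A_{p,q,\text{loc}}$ for a power weight. Here I would use the known fact (recorded in Subsection \ref{localsubsec}) that power weights $|x|^\beta$ lie in $A_{p,q,\text{loc}}$ precisely in a range dictated by $\beta p>\lambda_1-n$ type conditions — and the remarks following Proposition \ref{powernec} already extract exactly $\beta_1 p>\lambda_1-n$ and $\beta_2 q\ge\lambda_2-n$ from \eqref{necpowerange}. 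These are the conditions that place the base weight $w$ in $A_{p,q,\text{loc}}$, which is the standing hypothesis of Theorem \ref{teogamma}.

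The second step is to verify that the power-weight conditions \eqref{necpowers}–\eqref{necpowerange} imply the abstract conditions \eqref{frmaxext} and \eqref{frintext} of Theorem \ref{teogamma} for balls centered at the origin. For such balls $B=B(0,R)$ one has $\widetilde B=B$ and the characteristic-function norms are computed by \eqref{locchar}: $\|\chi_B\|_{\mathcal{LM}^q(\varphi_{\lambda_2},v_\gamma^q)}\sim (v_\gamma^q(B)/\varphi_{\lambda_2}(B))^{1/q}$, and the dual norm by the analogue of \eqref{normprime}. Both reduce to explicit powers of $R$, and plugging them into \eqref{frmaxext} collapses the supremum to a single power $R^{c}$ with exponent $c$ that vanishes exactly because of \eqref{necpowers}; boundedness in $R$ then gives the finite supremum. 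For \eqref{frintext} the only extra ingredient is controlling $\|(M\chi_B)^{1-\alpha/n}\|$ in the dual norm: since $M\chi_B(y)\sim R^n/(R+|y|)^n$ is a radial power weight, this norm is again an explicit power of $R$, and the same cancellation via \eqref{necpowers} applies. I would also invoke Lemma \ref{lema71} if needed to handle the decay at infinity of $(M\chi_B)^{1-\alpha/n}$ in the Morrey dual norm, but the cleaner route is direct computation since all the functions involved are power weights.

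The main obstacle I expect is the bookkeeping in the second step — specifically, showing that the dual-norm factor $\|(M\chi_B)^{1-\alpha/n}\|_{\mathcal{LM}^p(\varphi_{\lambda_1},w_{\beta_1})'}$ for centered balls is finite and scales as the correct power of $R$. Unlike $\chi_B$, the function $(M\chi_B)^{1-\alpha/n}$ has a tail reaching to infinity, so its dual Morrey norm is not given by \eqref{locchar} but by an integral against the reverse-doubling weight $\varphi_{\lambda_1}$; the convergence of that integral is exactly where the range condition $\beta_1+(n-\lambda_1)/p<n$ (the strict right-hand inequality in \eqref{necpowerange}) enters, and I would expect the computation to mirror the tail estimate already performed in the proof of Proposition \ref{powernec} for the logarithmically divergent borderline case. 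Once these scaling identities are in hand, Theorem \ref{teogamma} delivers the sufficiency directly, and the case $p=1$ follows verbatim using the weak-type clause of that theorem.
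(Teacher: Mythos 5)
Your overall architecture matches the paper's: necessity is exactly Proposition \ref{powernec} (boundedness into $\mathcal{LM}^q$ implies boundedness into $W\mathcal{LM}^q$), and sufficiency is obtained by verifying the abstract conditions of Section \ref{sei} only for balls centered at the origin, with the tail of $(M\chi_B)^{1-\alpha/n}$ in the K\"othe dual norm summed as a geometric series whose convergence is precisely the strict right-hand inequality in \eqref{necpowerange} --- this is the paper's computation verbatim. For $\gamma<\alpha$ you route through Theorem \ref{teogamma}, whereas the paper uses the embedding Lemma \ref{lema71} to reduce to the Sobolev exponents; since the paper itself remarks that using Theorem \ref{teogamma} ``would be possible,'' that divergence is legitimate (though note Theorem \ref{teogamma} is stated for $\gamma<\alpha$ strictly, so the case $\gamma=\alpha$ must go through Theorem \ref{teosuffrac1}).

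There is, however, a genuine gap in your reduction step. Theorems \ref{teosuffrac1} and \ref{teogamma} concern spaces of the special form $L\mathcal M^{p}(\varphi^p,w^p)\to L\mathcal M^{q}(\varphi^q,v_\gamma^q)$, with the \emph{same} $\varphi$ on both sides and with $v_\gamma=|x|^{\gamma-\alpha}w$. For general data $(\lambda_1,\lambda_2,\beta_1,\beta_2)$ satisfying \eqref{necpowers} this matching fails outright: it would force $\lambda_1/p=\lambda_2/q$ and $\beta_2=\beta_1+\gamma-\alpha$, neither of which follows from \eqref{necpowers} alone. You must first renormalize each space via Proposition \ref{rm23} (replacing $\varphi$ by $\varphi\,r_B^{\sigma}$ and $w$ by $w|x|^{\sigma}$ leaves the space unchanged), and then check that the two shift parameters can be chosen compatibly; the compatibility condition turns out to be exactly \eqref{necpowers}. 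This is the content of the paper's ``Case $\gamma=\alpha$ and $\beta_2\ne\beta_1$'' step, which your write-up omits entirely, and without it the quantities in \eqref{frmaxext} and \eqref{frintext} are not even attached to the spaces in the statement. A secondary inaccuracy: \eqref{necpowerange} is not what places the base weight in $A_{p,q,\text{loc}}$ --- every power weight $|x|^{\eta}$ lies in the local class because $|x|\sim|c_B|$ on balls with $r_B<\kappa|c_B|$. The role of \eqref{necpowerange} is instead to make $\|\chi_B\|$ finite in the target space (left inequality) and to make the dual-norm tail sum converge (right inequality), as you correctly exploit afterwards. With the rescaling step inserted, your argument closes.
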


\begin{proof}
The necessity of the conditions have been proved in Proposition \ref{powernec}. Let us check the sufficiency. 

\textit{Case $\gamma=\alpha$ and $\beta_1=\beta_2:=\beta$}. Then \eqref{necpowers} reduces to $\lambda_1/p=\lambda_2/q$. 
Using Theorem \ref{teosuffrac1} we need to check \eqref{suffracmax1} for $\varphi(B)=r_B^{\lambda_1/p}=r_B^{\lambda_2/q}$. We only need to verify it for balls centered at the origin, because $w_\beta\in A_{p,q,\text{loc}}$.
 
 Let $B$ be a ball centered at the origin. Then 
\begin{equation*}
\|\chi_B\|_{\mathcal{LM}^q(\varphi_{\lambda_2},w_{\beta}^q)}\sim r_B^{\frac{n-\lambda_2}q+\beta}
\end{equation*}
and
\begin{equation}\label{estcent}
\|\chi_B\|_{\mathcal{LM}^p(\varphi_{\lambda_1}, w_{\beta}^p)'}\lesssim r_B^{n-\frac{n-\lambda_1}p-\beta}.
\end{equation}
The last one is similar to \cite[Lemma 2.12 (b)]{DR20}. Consequently,  \eqref{suffracmax1} holds for balls centered at the origin.

For $I_\alpha$ we need to check \eqref{suffracint1}. By Remark \ref{rem65} we only need to consider $B$ centered at the origin. We use that	
\begin{equation*}
M\chi_B(x) \sim \chi_B(x) + \sum_{k=1}^\infty 2^{-kn} \chi_{2^kB\setminus 2^{k-1}B}(x).
\end{equation*}
With \eqref{estcent} for $2^kB$ we have  
\begin{equation*}
\|(M\chi_B)^{1-\alpha/n}\|_{\mathcal{LM}^p(\varphi_{\lambda_1}, w_{\beta}^p)'} \lesssim \sum_{k=0}^\infty 2^{-k(n-\alpha)} (2^k r_B)^{n-\frac{n-\lambda_1}p-\beta} 
\lesssim r_B^{n-\frac{n-\lambda_1}p-\beta},
\end{equation*}
where in the last step we use that the exponent of $2^k$ is negative due to the strict inequality assumed in \eqref{necpowerange}. Consequently, \eqref{suffracint1} is satisfied.

\textit{Case $\gamma=\alpha$ and $\beta_2\ne \beta_1$}. If $\beta_2>\beta_1$, let $\lambda_1'=\lambda_1+(\beta_2-\beta_1)p$. Then using Proposition \ref{rm23} we have
\begin{equation*}
\mathcal{LM}^p(\varphi_{\lambda_1}, w_{\beta_1}^p) = \mathcal{LM}^p(\varphi_{\lambda_1'}, w_{\beta_2}^p).
\end{equation*}
Condition \eqref{necpowers} becomes $\lambda_1'/p=\lambda_2/q$ and \eqref{necpowerange} is satisfied with $\beta_2$ and $\lambda_1'$ instead of $\beta_1$ and $\lambda_1$. Thus we can apply the result in the first part of the proof to deduce the boundedness from $\mathcal{LM}^p(\varphi_{\lambda_1'}, w_{\beta_2}^p)$ to $\mathcal{LM}^q(\varphi_{\lambda_2}, w_{\beta_2}^q)$, which is the desired result.

The proof for $\beta_1>\beta_2$ is similar, setting $\lambda_2'=\lambda_2+(\beta_1-\beta_2)q$ and taking into account that
\begin{equation*}
\mathcal{LM}^q(\varphi_{\lambda_2}, w_{\beta_2}^q) = \mathcal{LM}^q(\varphi_{\lambda_2'}, w_{\beta_1}^q).
\end{equation*}

\textit{Case $\gamma<\alpha$.} Although it would be possible to use Theorem \ref{teogamma} we prefer to proceed 
using Lemma \ref{lema71} to provide a different insight.

Given $p,q,\lambda_1,\lambda_2,\beta_1$ and $\beta_2$ such that \eqref{necpowers} and \eqref{necpowerange} hold, we choose $p_0, q_0, \mu_1, \mu_2, \delta_1$ and $\delta_2$ such that 
\begin{gather*}
p\ge p_0, \ q\le q_0, \ \frac 1{p_0}-\frac 1{q_0}=\frac \alpha n,\ \mu_1, \mu_2>0,\\
\beta_1 + \frac{n-\lambda_1}p = \delta_1 + \frac{n-\mu_1}{p_0}, \text{ and } \beta_2 + \frac{n-\lambda_2}q = \delta_2 + \frac{n-\mu_2}{q_0}.
\end{gather*}
Then the corresponding conditions \eqref{necpowers} and \eqref{necpowerange} hold for $p_0, q_0, \mu_1, \mu_2, \delta_1$ and $\delta_2$, and $p_0$ and $q_0$ satisfy the conditions in the previous part of the proof. Using Lemma \ref{lema71} we obtain
\begin{equation*}
\aligned
\|M_\alpha f\|_{\mathcal{LM}^q(\varphi_{\lambda_2}, w_{\beta_2}^q)}&\lesssim 
\|M_\alpha f\|_{\mathcal{LM}^{q_0}(\varphi_{\mu_2}, w_{\delta_2}^{q_0})}\\
 & \lesssim \|f\|_{\mathcal{LM}^{p_0}(\varphi_{\mu_1}, w_{\delta_1}^{p_0})}\lesssim
\|f\|_{\mathcal{LM}^p(\varphi_{\lambda_1}, w_{\beta_1}^p)}.
\endaligned
\end{equation*}

In the case $p=1$ one has to work with weak-type inequalities, but the proof has only small changes and we do not write the details here.
\end{proof}

The result for $n/p-n/q=\alpha$ and $\beta_1=\beta_2$ is in \cite{NST19}. When $\beta_1=\beta_2=0$ this is known as Spanne's result (usually stated for global Morrey spaces).

\subsection{Weighted spaces of Komori-Shirai type}

Given a weight $w$, let us denote $\psi_{w,\lambda}(B)=w(B)^{\lambda/n}$. The weighted local Morrey spaces of Komori-Shirai type are defined by $L\mathcal M^{p}(\psi_{w,\lambda},w)$ with $\lambda>0$. The typical estimates for fractional operators are not exactly between two spaces of Komori-Shirai type. For instance, when we say that $M_\alpha$ and $I_\alpha$ are bounded from $L\mathcal M^{p}(\psi_{w^q,\lambda},w^p)$ to $L\mathcal M^{q}(\psi_{w^q,\lambda},w^q)$ (see \cite{KS09} for global Morrey spaces) the first space is not of Komori-Shirai type as defined above. To agree with the definition it should be  $L\mathcal M^{p}(\psi_{w^p,\lambda},w^p)$.

Nevertheless, in the case of power weights, the results can be stated in such a way that both spaces are of Komori-Shirai type. This holds because those spaces can be represented as Samko-type spaces as shown in the following proposition.

\begin{proposition}
Let $\lambda>0$ and $\beta>-n$. Then 
\begin{equation}\label{kss}
\mathcal{LM}^{p}(\psi_{w_\beta,\lambda},w_\beta) = \mathcal{LM}^{p}(\varphi_{\lambda(1+\beta/n)},w_\beta) = \mathcal{LM}^p(\varphi_{\lambda},w_{\beta(1-\lambda/n)}).
\end{equation}
A similar result holds for weak-type spaces.
\end{proposition}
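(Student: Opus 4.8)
The plan is to prove the two equalities in \eqref{kss} by showing that the defining norms are equivalent. The key observation, exploited repeatedly in this section, is that by Proposition \ref{newnorm} it suffices to work with the restricted norm \eqref{normrestrict}, where the supremum runs only over balls $B$ with $r_B=\kappa|c_B|$ for a fixed $\kappa\in(0,1)$. On such balls one has $|x|\sim r_B\sim r_{\widetilde B}$ for every $x\in B$, so power weights behave like constants: $w_\beta(x)=|x|^\beta\sim r_{\widetilde B}^\beta$ on $B$. This is precisely the mechanism behind Proposition \ref{rm23}, and I would invoke the same idea here.

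First I would compute $\psi_{w_\beta,\lambda}(\widetilde B)=w_\beta(\widetilde B)^{\lambda/n}$ for a ball $\widetilde B=B(0,R)$ centered at the origin. Since $\beta>-n$, the weight $w_\beta$ is locally integrable near the origin and $w_\beta(B(0,R))=\int_{|x|<R}|x|^\beta\,dx\sim R^{n+\beta}$. Hence $\psi_{w_\beta,\lambda}(\widetilde B)\sim R^{(n+\beta)\lambda/n}=R^{\lambda(1+\beta/n)}=\varphi_{\lambda(1+\beta/n)}(\widetilde B)$. Before applying Proposition \ref{newnorm} I must check that both $\psi_{w_\beta,\lambda}$ and $\varphi_{\lambda(1+\beta/n)}$ satisfy the doubling and reverse doubling property \eqref{rdplus}; this follows immediately because each is comparable to the power $R^{\lambda(1+\beta/n)}$ of the radius, and $\lambda(1+\beta/n)>0$ when $\lambda>0$ and $\beta>-n$. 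With the two $\varphi$-functions comparable on balls centered at the origin, the restricted norms in \eqref{normrestrict} agree, giving the first equality in \eqref{kss}.

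For the second equality I would again use the restricted norm, where on each admissible ball $|x|\sim r_{\widetilde B}$. The two candidate expressions are $\varphi_{\lambda(1+\beta/n)}(\widetilde B)^{-1}\int_B|f|^p w_\beta$ and $\varphi_\lambda(\widetilde B)^{-1}\int_B|f|^p w_{\beta(1-\lambda/n)}$. On an admissible ball one has $w_{\beta(1-\lambda/n)}(x)=|x|^{\beta(1-\lambda/n)}\sim r_{\widetilde B}^{\beta(1-\lambda/n)}=r_{\widetilde B}^{-\beta\lambda/n}\,|x|^\beta\sim r_{\widetilde B}^{-\beta\lambda/n}\,w_\beta(x)$, and the radius factor $r_{\widetilde B}^{-\beta\lambda/n}$ combines with $\varphi_\lambda(\widetilde B)^{-1}=r_{\widetilde B}^{-\lambda}$ to give $r_{\widetilde B}^{-\lambda(1+\beta/n)}\sim\varphi_{\lambda(1+\beta/n)}(\widetilde B)^{-1}$. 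Thus the two integrals are comparable term by term, uniformly over admissible balls, and the second equality follows.

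The routine but essential step I expect to be the main obstacle is justifying the use of the restricted norm for all three $\varphi$-functions simultaneously: one must verify \eqref{rdplus} for $\psi_{w_\beta,\lambda}$, $\varphi_{\lambda(1+\beta/n)}$, and $\varphi_\lambda$, which is where the hypotheses $\lambda>0$ and $\beta>-n$ are genuinely used (the condition $\beta>-n$ guarantees both local integrability of $w_\beta$ and the comparison $w_\beta(B(0,R))\sim R^{n+\beta}$, and together with $\lambda>0$ it gives the positive exponent needed for reverse doubling). The weak-type analogue stated at the end follows verbatim, since by \eqref{wcharac} and Subsection \ref{subs23} the weak-type local Morrey norm is governed by the same $\varphi$-function and the same pointwise comparison of weights on admissible balls.
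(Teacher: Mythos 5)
Your proposal is correct and follows essentially the same route as the paper: reduce to the restricted norm \eqref{normrestrict}, use $|x|\sim r_B\sim r_{\widetilde B}$ on admissible balls together with $w_\beta(B(0,R))\sim R^{n+\beta}$ (valid since $\beta>-n$) to identify $\psi_{w_\beta,\lambda}$ with $\varphi_{\lambda(1+\beta/n)}$, and then trade the radius power against the weight power as in Proposition \ref{rm23}. The paper's proof is just a terser version of the same argument; your explicit verification of \eqref{rdplus} for the three $\varphi$-functions is a detail the paper leaves implicit.
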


\begin{proof}
In all cases we can use the restriction of the Morrey norm to \eqref{normrestrict}. Notice that for the balls in \eqref{normrestrict} we have the equivalence $|c_B|\sim r_B \sim |x|$ for $x\in B$. Then $w_\beta(B)\sim r_B^{\beta+n}$ and the result follows.
\end{proof}

Using this equality of spaces we can rewrite Theorem  \ref{teopot} in terms of Komori-Shirai type spaces. We state it as a corollary.

\begin{corollary}
Let $0< \alpha<n$, $0\le \gamma \le \alpha$, and $1/p-1/q= \gamma/n$. 
 Let $1<p<n/\gamma$. Then  $I_\alpha$ is bounded from $\mathcal{LM}^p(\psi_{w_{\beta_1}^p,\lambda_1},w_{\beta_1}^p)$ to $\mathcal{LM}^q(\psi_{w_{\beta_2}^q,\lambda_2},w_{\beta_2}^q)$ if and only if 
 \begin{equation}\label{condksp1}
(n-\lambda_2)\left(\frac 1q+\frac{\beta_2}n\right) -(n-\lambda_1)\left(\frac 1p+\frac{\beta_1}n\right)+\alpha=0.
\end{equation}
and
 \begin{equation}\label{condksp2}
\alpha <(n-\lambda_1)\left(\frac 1p+\frac{\beta_1}n\right) < n.
\end{equation}
\par 
With $p=1$ and $q=n/(n-\gamma)$ the above conditions are necessary and sufficient for $I_\alpha$ to be bounded from $\mathcal{LM}^1(\psi_{w_{\beta_1},\lambda_1},w_{\beta_1})$ to $W\mathcal{LM}^{n/(n-\gamma)}(\psi_{w_{\beta_2}^{n/(n-\gamma)},\lambda_2},w_{\beta_2}^{n/(n-\gamma)})$.

Similar results hold for $M_\alpha$ adding the equality at the left-hand side of \eqref{condksp2}. 
\end{corollary}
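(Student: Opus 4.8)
The plan is to deduce this corollary directly from Theorem \ref{teopot} by translating the Komori-Shirai type spaces into Samko type spaces using the preceding proposition, specifically the chain of equalities \eqref{kss}. Since the corollary statement is precisely Theorem \ref{teopot} rewritten, there is no new analytic content; the work is bookkeeping of the exponent relations. First I would apply the second equality in \eqref{kss} to both the domain and target spaces. For the domain, $\mathcal{LM}^p(\psi_{w_{\beta_1}^p,\lambda_1},w_{\beta_1}^p)$ is a Komori-Shirai space built on the weight $w_{\beta_1}^p = w_{\beta_1 p}$; writing it as $\mathcal{LM}^p(\varphi_{\lambda_1},v^p)$ in Samko form I would read off the effective power exponent of the underlying weight. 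Likewise the target $\mathcal{LM}^q(\psi_{w_{\beta_2}^q,\lambda_2},w_{\beta_2}^q)$ becomes a Samko space $\mathcal{LM}^q(\varphi_{\lambda_2},v'^q)$.

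The key computation is to carry the exponents through \eqref{kss}. The proposition gives, for a power weight $w_\beta$ with Komori-Shirai parameter $\lambda$, the identification with the Samko space $\mathcal{LM}^p(\varphi_\lambda, w_{\beta(1-\lambda/n)})$. Applying this with the weight $w_{\beta_1}$ (so that $w_{\beta_1}^p$ corresponds to the Samko weight $w_{\beta_1(1-\lambda_1/n)}^p$) produces an effective Samko exponent $\widehat\beta_1 = \beta_1(1-\lambda_1/n)$ in the domain, and similarly $\widehat\beta_2 = \beta_2(1-\lambda_2/n)$ in the target. Then I would substitute $\widehat\beta_1,\widehat\beta_2,\lambda_1,\lambda_2$ into the Samko conditions \eqref{necpowers} and \eqref{necpowerange} of Theorem \ref{teopot}. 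The Samko balance condition $\widehat\beta_2-\widehat\beta_1-(n-\lambda_1)/p+(n-\lambda_2)/q+\alpha=0$ should collapse, after grouping, exactly into \eqref{condksp1}: one checks that $\widehat\beta_2+(n-\lambda_2)/q = (n-\lambda_2)(1/q+\beta_2/n)$ and $\widehat\beta_1+(n-\lambda_1)/p = (n-\lambda_1)(1/p+\beta_1/n)$, so the two expressions in \eqref{condksp1} are precisely the Samko quantities and the identity matches. Analogously, the Samko range condition $\alpha\le\widehat\beta_1+(n-\lambda_1)/p<n$ becomes $\alpha<(n-\lambda_1)(1/p+\beta_1/n)<n$, which is \eqref{condksp2} with the strict left inequality appropriate to $I_\alpha$.

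Finally I would invoke Theorem \ref{teopot}(2) for $I_\alpha$ on the Samko spaces to conclude boundedness from $\mathcal{LM}^p(\varphi_{\lambda_1},w_{\widehat\beta_1})$ to $\mathcal{LM}^q(\varphi_{\lambda_2},w_{\widehat\beta_2})$ under exactly \eqref{condksp1} and \eqref{condksp2}, and the reverse (necessity) direction from the same theorem. The weak-type case $p=1$, $q=n/(n-\gamma)$ is handled identically, using the weak-type clause of the proposition and of Theorem \ref{teopot}; the statement for $M_\alpha$ with equality permitted in the left of \eqref{condksp2} follows from Theorem \ref{teopot}(1) by the same substitution. The one point requiring care is verifying that the parameter constraints needed to apply \eqref{kss}, namely $\lambda>0$ and $\beta>-n$, hold in our setting: the hypothesis $\lambda_1,\lambda_2>0$ is given, and the condition $\beta_i p_i>-n$ (equivalently that the power weight is locally integrable) is exactly guaranteed by the range restriction \eqref{condksp2} derived above, as noted in the remark following Proposition \ref{powernec}. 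I expect the main obstacle to be purely clerical: keeping the three equivalent weight exponents in \eqref{kss} straight so that the algebraic identities \eqref{condksp1} and \eqref{condksp2} come out in the stated form rather than a cosmetically different but equivalent one.
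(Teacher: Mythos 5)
Your proposal is correct and matches the paper's own (very brief) argument: the corollary is obtained exactly by rewriting Theorem \ref{teopot} through the identification \eqref{kss}, with the effective Samko exponents $\widehat\beta_i=\beta_i(1-\lambda_i/n)$ turning \eqref{necpowers} and \eqref{necpowerange} into \eqref{condksp1} and \eqref{condksp2}. The exponent bookkeeping you carry out is precisely what the paper leaves implicit, and your check of the applicability constraints ($\lambda_i>0$, $\beta_i p>-n$) agrees with the remark following the corollary.
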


Notice that for the local integrability of $w_{\beta_1}^p$ and $w_{\beta_2}^q$ we need $\beta_1p+n>0$ and $\beta_2q+n>0$. From \eqref{condksp2} we deduce $\lambda_1<n$ and together with \eqref{condksp1} we obtain the necessity of $\lambda_2<n$. For  $M_\alpha$ in the equality case of the left-hand side of \eqref{condksp2}  $\lambda_2=n$ is admissible too.

%%%%%%%%%%%%%%%%%%%%%%%%%%%%

\end{document}